\documentclass[english,11pt]{article}
\usepackage[margin=2.5cm]{geometry}
\usepackage[hyphens]{url}
\usepackage[hidelinks]{hyperref}
\usepackage{amsthm,amsfonts,amssymb,amsmath}
\usepackage{graphicx}
\usepackage{cleveref}
\usepackage[square,sort,comma,numbers]{natbib}

\newtheorem{theorem}{Theorem}[section]

\Crefname{prop}{Proposition}{Propositions}
\newtheorem{lemma}[theorem]{Lemma}

\newtheorem{conj}[theorem]{Conjecture}
\newtheorem{qn}[theorem]{Question}

\newtheorem{definition}[theorem]{Definition}

\theoremstyle{definition}

\numberwithin{equation}{section}
\numberwithin{figure}{section}
\allowdisplaybreaks

\usepackage{cleveref}
\usepackage{babel}
\usepackage[T1]{fontenc}
\usepackage[utf8]{inputenc}

\newcommand{\eps}{\varepsilon}

\newcommand\car{\mathbin{\text{\scalebox{.84}{$\square$}}}}

\date{} 
\title{Long cycles in vertex transitive digraphs}

\author{Matija Buci\'c\thanks{Faculty of Mathematics, University of Vienna, Austria, and Department of Mathematics, Princeton University, USA. Email: \texttt{matija.bucic@univie.ac.at}. Research supported in part by NSF Award DMS-2349013.} \and Kevin Hendrey\thanks{School of Mathematics, Monash University, Australia. Research supported by the Australian Research Council.} \and Bojan Mohar\thanks{Department of Mathematics, Simon Fraser University, Burnaby, BC, Canada. 
Email: \texttt{mohar@sfu.ca}. Research supported in part by the NSERC Discovery Grant R832714 (Canada), by the ERC Synergy grant (European Union, ERC, KARST, project number 101071836), and by the Research Core Grant P1-0297 of ARIS (Slovenia). On leave from: FMF, Department of Mathematics, University of Ljubljana, Ljubljana, Slovenia.} \and Raphael Steiner\thanks{Department of Mathematics, ETH Z\"{u}rich, Switzerland. Email: \texttt{raphaelmario.steiner@math.ethz.ch}. Research supported by SNSF Ambizione grant.} \and Liana Yepremyan\thanks{Department of Mathematics, Emory University, Atlanta, GA 30322. Email: {\tt lyeprem@emory.edu}. Research supported by NSF Award DMS-2247013.}}

\usepackage{xcolor}

\begin{document}
\maketitle
\begin{abstract}
One of the most well-known conjectures concerning Hamiltonicity in graphs asserts that any sufficiently large connected vertex transitive graph contains a Hamilton cycle. In this form, it was first written down by Thomassen in 1978, inspired by a closely related conjecture due to Lov\'asz from 1969. It has been attributed to several other authors in a survey on the topic by Witte and Gallian in 1984. 

The analogous question for vertex transitive digraphs has an even longer history, having been first considered by Rankin in 1946. It is arguably more natural from the group-theoretic perspective underlying this problem in both settings. Trotter and Erd\H{o}s proved in 1978 that there are infinitely many connected vertex transitive digraphs which are not Hamiltonian. This left open the very natural question of how long a directed cycle one can guarantee in a connected vertex transitive digraph on $n$ vertices. 

In 1981, Alspach asked if the maximum perimeter gap
(the gap between the circumference and the order of the digraph)
is a growing function in $n$. 
We answer this question in the affirmative, showing that it grows at least as fast as $(1-o(1)) \ln n$. 
On the other hand, we prove that one can always find a directed cycle of length at least $\Omega(n^{1/3})$, establishing the first lower bound growing with $n$, providing a directed analogue of a famous result of Babai from 1979 in the undirected setting.
\end{abstract} 

\section{Introduction}

Finding long paths and cycles in graphs is one of the most classical directions of study in graph theory. Perhaps the most famous instance of this general direction is the question of finding the longest possible cycle, namely one that traverses all the vertices. Such a cycle is called a \emph{Hamilton cycle}, and a graph containing it is said to be \emph{Hamiltonian}. Hamiltonicity is a very classical and extensively studied graph property. In general, it is a hard problem to decide if a given graph is Hamiltonian. In fact, this is one of Karp's famous 21 NP-hard problems \cite{karp}, and is often used to establish hardness of other computational problems.

This goes a long way towards explaining why there are so many interesting results establishing sufficient conditions for Hamiltonicity. The simplest one, featured in essentially every introductory course on graph theory, is Dirac's theorem from 1952, which states that any graph with minimum degree at least $\frac{n}{2}$ is Hamiltonian. 
A major downside of Dirac's theorem is that it only applies for very dense graphs, leading to a more challenging question of finding natural graph properties that would force Hamiltonicity even for much sparser graphs. Perhaps the most intriguing
candidate, first considered in the 1960s, is symmetry. 
This idea first appeared in a communication by Lov\'asz \cite{lovasz1969} from 1969, where he conjectured that any connected vertex transitive graph must contain a Hamilton path. Thomassen (see \cite{babai}) refined this conjecture in 1978 by asserting that any sufficiently large connected vertex transitive graph is Hamiltonian. 

These conjectures have attracted an immense amount of work over the years, with multiple surveys on the topic \cite{cayley-survey-84,pak-radoicic-survey,alspach81,curran-gallian-survey} being written, starting as early as the 1980s. Despite this attention, both conjectures remain widely open and most of what is known concerns various additional assumptions under which the conjectures hold. On the other hand, Babai \cite{babai} already in 1979 initiated a very general direction of attack, namely of trying to at least find a long cycle in \emph{every} connected vertex transitive graph (without any additional assumptions). In particular, he proved that an $n$-vertex connected vertex transitive graph always has a cycle of length $\Omega(\sqrt{n})$. In recent years, there have been several improvements over this result, all making interesting connections to a number of other interesting graph theoretic problems \cite{yepremyan,norin2025small,devos,ma2025intersections} leading to the current state of the art of $\Omega(n^{9/14})$ proved in \cite{norin2025small}. 

In this paper, we are interested in the directed analogue of this problem. Namely, how long a cycle can we find in any $n$-vertex connected vertex transitive digraph?  This question is even more classical, owing to the fact that in the arguably most interesting instance of the problem, namely that of Cayley digraphs, the directed variant is more natural\footnote{The key difference is that in the digraph case, one 
deals with arbitrary generating sets, not just symmetric ones, and this is more natural from the group theoretic perspective.} 
and translates to a natural group rearrangement problem. Indeed, the two oldest papers to consider this problem started from this group theoretic question and translated it to the Cayley digraph instance of our problem. These were a 1946 paper by Rankin \cite{rankin}, and an independent 1959 paper by Rapaport-Strasser \cite{RS}. Both of these works attribute their motivation to Campanology (as well as the ``knight tour'' problem in the latter case), where various instances of this problem have been solved in practice, more than a century before (see \cite[Section 4]{rankin} or \cite[Chapter 15]{white1985graphs} for more details).

The directed analog of Thomassen's conjecture was first disproved\footnote{Technically, the argument from \cite{TrotterErdos} relies on the existence of infinitely many so-called Sophie-Germain primes, which is still an open problem in number theory to this day. We discuss this in more detail in \Cref{sec:gap}.} by Trotter and Erd\H{o}s \cite{TrotterErdos} in 1978, who exhibited an infinite family of connected vertex transitive digraphs without Hamilton cycles. Motivated by this result, Alspach asked already in 1981 whether such graphs need to be at least ``nearly'' Hamiltonian. Here, to formalize this question, we can use the \emph{perimeter gap}, defined as the difference between the number of vertices and the length of the longest directed cycle in a digraph, as a measure of how far from Hamiltonian a graph is. In particular, Alspach (Question~7 in~\cite{alspach81}) asked if for any constant $C$ there exists a connected vertex transitive digraph with perimeter gap larger than $C$. We answer this question in the affirmative in the following quantitative form.

\begin{theorem}\label{thm:cycles-short}
    For infinitely many natural numbers $n$, there exists a connected vertex transitive digraph on $n$ vertices with perimeter gap at least $(1-o(1))\ln(n)$. 
\end{theorem}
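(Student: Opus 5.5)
The plan is to work in the Cartesian product $C_a \car C_b$ of two directed cycles, which is a Cayley digraph of $\mathbb{Z}_a\times\mathbb{Z}_b$ with generators $x=(1,0)$ and $y=(0,1)$, and hence connected and vertex transitive. The starting point is the Rankin/Trotter--Erd\H{o}s analysis. If $d=\gcd(a,b)$, then the element $x-y$ has order $\operatorname{lcm}(a,b)=ab/d$, so its cosets split the vertex set into $d$ \emph{diagonals} $D_0,\dots,D_{d-1}$, with $x$ and $y$ both mapping $D_i$ to $D_{i+1}$. If a Hamilton cycle leaves $u$ along $x$, then the vertex $u+x-y$ must also leave along $x$, since otherwise $u+x$ would have two in-arcs on the cycle. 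So the vertices using $x$ form a union of whole diagonals. Tracking the resulting permutation then gives the criterion: $C_a\car C_b$ is Hamiltonian iff there are $s,t\ge 0$ with $s+t=d$, $\gcd(s,a)=\gcd(t,b)=1$.

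I will prove a quantitative ``stability'' version of this for cycles missing only $r$ vertices.
\begin{itemize}
\item Let $C$ be a directed cycle of length $n-r$ with $n=ab$. Call a diagonal \emph{pure} if every vertex of it lies on $C$ and all of them leave along the same generator.
\item The local argument above shows that on a diagonal, a switch between $x$-exits and $y$-exits can only happen next to a vertex missed by $C$. Hence at most $O(r)$ diagonals are impure, and inside each impure one the exit pattern is constant on arcs of the diagonal between missed vertices.
\item If $d$ is much larger than $r$, most diagonals are pure, say $s'$ of type $x$ and $t'$ of type $y$. Following $C$ through the $d$ diagonals, one period moves by roughly $(s',t')$ up to an error supported on the $O(r)$ impure diagonals. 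This should give an arithmetic obstruction: there are $s,t$ with $s+t=d$, $|s-s'|\le O(r)$, and $\gcd(s,a)=1$, $\gcd(t,b)=1$ up to a correction by a bounded defect term.
\end{itemize}

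For the arithmetic, set $k\approx(1-\varepsilon)\ln n$ and let $P$ be the product of all primes up to about $k$, so that $\ln P\sim k$ by the prime number theorem. I will choose $a,b$ with $d=\gcd(a,b)$ such that for every $s\in[0,d]$, either $s$ shares a prime $\le k$ with $a$ or $d-s$ shares one with $b$, robustly against shifts of size up to $r\le k$. For instance, take $a$ and $b$ to be divisible by all small primes and $d$ a suitable multiple. This is a covering/sieve condition in the spirit of Jacobsthal's function, solvable by the Chinese remainder theorem with $n=ab$ only about $e^{(1+o(1))k}$. Combining this with the stability lemma rules out all cycles of length greater than $n-k$, which gives perimeter gap at least $(1-o(1))\ln n$.

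The main obstacle is the stability lemma. I must show that a cycle missing $r$ vertices really only perturbs the Hamiltonian arithmetic condition by $O(r)$, with constants tight enough to keep the leading term $\ln n$. I would first try to make the impure diagonals contribute an exactly computable shift, by treating each missed vertex as a ``defect'' that changes one diagonal's exit type on a segment. If the constants are lossy, I would fall back to products of more cycles (abelian groups of higher rank), where the same diagonal argument applies to each pair of generators.
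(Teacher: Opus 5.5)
There is a genuine gap. The step you yourself flag as the main obstacle, the stability lemma, is never proved, and your sketch does not lead to one. In the Hamiltonian case the arithmetic comes from the fact that $v_{i+d}=v_i+(d_1,d_2)$ for \emph{every} $i$. So $C$ is a single orbit of one translation and $|C|$ equals the order of $(d_1,d_2)$, which is exactly what produces the coprimality conditions. Once even one vertex is missed, the displacement over $d$ consecutive steps can depend on where you start, and $C$ is no longer an orbit of a fixed element. Saying it ``moves by roughly $(s',t')$'' then says nothing about $\gcd(s,a)$ or $\gcd(t,b)$, because coprimality is not stable under $O(r)$ perturbations of $s$.

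The arithmetic half is also unsupported. Your sample choice makes both $a$ and $b$ divisible by all primes up to $k$, which puts all of them into $d=\gcd(a,b)$. Then $d\ge P$ and $n\ge P^2$, contradicting $n\approx e^{(1+o(1))k}$. Moreover $s=1$ is uncovered, since $d-1$ is coprime to $P$. No CRT construction is actually given.

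The missing idea, which makes any stability lemma unnecessary, is a one-line divisibility fact. Suppose a directed cycle in $\vec C_a\car\vec C_b$ uses $\ell_1$ arcs of type $(1,0)$ and $\ell_2$ of type $(0,1)$. Then $\ell_1(1,0)+\ell_2(0,1)=0$ forces $a\mid\ell_1$ and $b\mid\ell_2$, so $d=\gcd(a,b)$ divides the length of every directed cycle. Since $d\mid n$, the digraph is either Hamiltonian or has perimeter gap at least $d$ (Lemma~\ref{lem:pergap}). You then only need infinitely many prime partitionable $d$ with witness $ab\le e^{d+o(d)}$. Prime partitionable means no positive $d_1+d_2=d$ has $\gcd(a,d_1)=\gcd(b,d_2)=1$, so Theorem~\ref{thm:trottererdos} excludes a Hamilton cycle.

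That number-theoretic step is genuinely nontrivial: Trotter and Erd\H{o}s's original argument needed infinitely many Sophie Germain primes. The paper uses Motohashi's theorem to get a prime $q\equiv 1\pmod p$ with $q<p^{1.64}$, and sets $d=p+q$, $a=dpq$ and $b=d\prod_{z<d,\,z\ne p,q}z$. If $d_2$ is coprime to $b$, then $d_2\in\{1,p,q\}$ because $p^2>d$, so $d_1\in\{p+q-1,q,p\}$. Each of these shares a factor with $a$, using $p\mid q-1$ for the first. The prime number theorem then gives $ab\le e^{d+o(d)}$, hence perimeter gap at least $d\ge(1-o(1))\ln n$.
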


Given this result, the question of how long of a cycle we can actually guarantee, raised by Babai in the undirected case already in 1979, becomes even more natural in the directed case. Here, it was not even known whether one can guarantee a cycle of length growing with $n$. We prove such a result, establishing a directed analog of Babai's result from 1979.

\begin{theorem}\label{thm:cycles-long}
    In any connected vertex transitive digraph $D$ on $n\ge 2$ vertices there is a directed cycle of length at least $\Omega(n^{1/3})$.
\end{theorem}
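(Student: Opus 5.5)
The plan is a case split on the out-degree, followed by a transitivity-and-counting argument in the spirit of Babai's undirected proof. The main difficulty is that in digraphs two cycles cannot be freely spliced together. First, connectivity can be upgraded to strong connectivity. Vertex transitivity forces every vertex to have the same in-degree and out-degree $d$. A finite weakly connected digraph in which every vertex has in-degree equal to out-degree is strongly connected: no arc can leave a proper subset closed under out-arcs without a compensating arc entering it. Next we dispose of the large-degree case. Any digraph with minimum out-degree $d$ contains a directed cycle of length at least $d+1$: take a longest directed path $v_0\dots v_k$; all out-neighbours of $v_k$ lie on it, and the earliest one closes a cycle of length at least $d+1$. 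So if $d\ge n^{1/3}$ we are done, and from now on $d<n^{1/3}$. Throughout, let $L$ be the circumference, and suppose for contradiction that $L\le c\,n^{1/3}$ for a small constant $c$.

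The core of the argument uses transitivity on a longest cycle $C$. Since $\mathrm{Aut}(D)$ is transitive, the images of $C$ cover $V(D)$, and each vertex lies on the same number of images. Take a maximal family $\mathcal C$ of pairwise vertex-disjoint images of $C$; then every image meets $\bigcup\mathcal C$. I would then build an auxiliary structure on $\mathcal C$: declare $C_1\to C_2$ if some short directed path (of length at most about $L$) leaves $C_1$ and enters $C_2$ with interior avoiding $\bigcup\mathcal C$. Strong connectivity of $D$ makes this auxiliary digraph strongly connected. The key local lemma to prove is a splicing lemma. Suppose $C_1,C_2$ are disjoint cycles of length $L$, joined by two internally disjoint connecting paths in opposite directions, and the attachment points can be chosen from many candidates. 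Then one can choose attachment points so that the resulting cycle, which uses an arc of $C_1$ from the re-entry point to the exit point and an arc of $C_2$, has length greater than $L$. This contradicts maximality of $L$.

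The step I expect to be the main obstacle is exactly this splicing lemma. In the undirected setting one can traverse either arc of each cycle, but here the directions are forced, so the spliced cycle may be short. To get enough freedom I would use transitivity once more. The stabiliser of $C$ in $\mathrm{Aut}(D)$, together with the many images of $C$ through each vertex, should supply many exit and entry points spread around $C_1$ and $C_2$, so that the cyclic gap between a re-entry point and an exit point can be made large. Quantitatively, I expect to need about $n/L$ cycles in $\mathcal C$, each with at most $d$ exits per vertex. The constraint that no splice produces a cycle longer than $L$ should force every exit from $C_1$ to return to $C_1$ within distance roughly $L$. Iterating this, every vertex would then be reachable from $C_1$ only within about $dL\cdot L$ vertices, giving $n\lesssim dL^2<n^{1/3}L^2$ and hence $L\gtrsim n^{1/3}$.

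Balancing this bound against the case $d\ge n^{1/3}$ is what produces the exponent $1/3$. If the splicing lemma fails as stated, my fallback is to replace it with a Menger-type argument. I would look for two disjoint paths between consecutive members of $\mathcal C$, or else find a small separating set; the images of that set under transitivity would then contradict strong connectivity via a counting argument.
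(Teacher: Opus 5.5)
Your preliminary reductions are fine: equal in- and out-degree, strong connectivity via the Eulerian property, and a cycle of length at least $d+1$ from a longest path. The case $d<n^{1/3}$, however, has a genuine gap. The splicing lemma and the bound $n\lesssim dL^2$ are not proved, and the mechanism you propose for them cannot work. This is exactly the obstruction the paper cites when explaining why Babai-type arguments do not transfer to digraphs.

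You hope that transitivity (the stabiliser of $C$, the many images of $C$ through each vertex) supplies enough freedom in the attachment points. Nothing forces that stabiliser to be large. More decisively, the concluding remarks describe an infinite, bounded-degree, strongly $2$-connected vertex transitive digraph with no directed cycle longer than $4$; it is the two-way infinite version of \Cref{fig:disjoint}. Every ingredient your sketch uses is available there: transitivity, many images of a longest cycle through every vertex, maximal disjoint families of such images, small out-degree, and (by strong $2$-connectivity) the disjoint paths your Menger fallback looks for. Yet no splice ever beats $L=4$, while the digraph is infinite.

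So ``no splice longer than $L$ $\Rightarrow$ $|V(D)|\lesssim dL^2$'' cannot be derived without using finiteness in an essential way, and your sketch invokes finiteness only in the final count. Your intermediate claim, that every exit from $C_1$ returns to $C_1$ within about $L$ steps, holds trivially in every strongly connected digraph of circumference $L$, since each arc lies on a cycle of length at most $L$. So it confines nothing. A small separating set contradicts nothing either: $\vec C_n$ is vertex transitive with separating sets of size one. Note also that $n\lesssim dL^2$ would give circumference $\Omega(\sqrt n)$ for all bounded-degree vertex transitive digraphs, which is more than the paper proves.

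The missing idea is to stop trying to make one splice of two longest cycles longer than $L$, and instead chain \emph{many} cycles together, gaining at least one vertex per cycle. The paper splits on the directed diameter $\delta$ rather than on the degree.
\begin{itemize}
\item If $\delta\le n^{2/3}$, Babai-type averaging over the finite automorphism group shows that $D$ is a $\frac{1}{3\delta}$-expander. A directed DFS argument then yields a cycle of length at least $n/(9\delta)\ge n^{1/3}/9$.
\item If $\delta\ge n^{2/3}$ and $L<n^{1/3}$, pass to the undirected cycle graph $C(D)$: all directed cycles, adjacent when they intersect. This is close in spirit to your auxiliary structure on $\mathcal C$. It has diameter at least $\delta/L-1\ge n^{1/3}-1$ and is nearly transitive. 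An extension argument using a \emph{maximum} induced path with a geodesic tail (finiteness again) produces an induced cycle of length at least $\mathrm{diam}(C(D))-17$.
\end{itemize}
An induced cycle $C_1,\dots,C_k$ ($k\ge 4$) of $C(D)$ lifts to a directed cycle of length at least $k$ in $D$: follow each $C_i$ to its first vertex on $C_{i+1}$. Inducedness is precisely what prevents this concatenation from self-intersecting, and it is the rigorous substitute for your splicing lemma.
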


We note that neither Babai's result nor the aforementioned recent results improving on it extend to the directed case. This is because all of these proofs at various places crucially rely on the fact that in a $2$-connected graph $G$, any two longest cycles must intersect. However, an analogous statement fails badly for strongly $2$-connected digraphs, even if they are assumed to be $2$-out- and $2$-in-regular: In Figure~\ref{fig:disjoint} we illustrate a construction of strongly $2$-connected $2$-regular digraphs with arbitrarily many vertex-disjoint longest directed cycles. 

\begin{figure}[h]
    \centering
\includegraphics[width=0.9\linewidth]{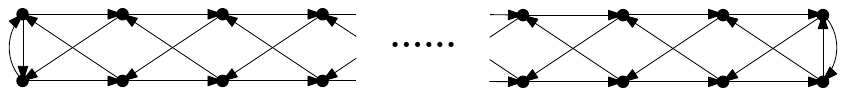}
    \caption{A family of strongly $2$-connected $2$-regular digraphs such that any longest directed cycle has length four, while the maximum length of a directed path is not bounded. In this family, the number of vertex-disjoint longest directed cycles can be arbitrarily large.}
    \label{fig:disjoint}
\end{figure}

In fact, our arguments provide slightly longer directed paths in place of directed cycles, namely of length at least $\Omega(\sqrt{n})$, see \Cref{thm:paths-long}. Using this, we also get a new proof of Babai's result in the undirected setting where the longest cycle and path are always within a constant factor of each other, see \cite{path-vs-cycle}. However, the same family of examples illustrated in Figure~\ref{fig:disjoint} shows that in general it does not suffice to use the existence of long directed paths and strong $2$-connectivity to infer the existence of long directed cycles. Hence, to prove Theorem~\ref{thm:cycles-long}, we need additional ideas, beyond the comparatively simpler ideas that suffice to guarantee a long directed path.

\textbf{Notation.} In this paper, all graphs and digraphs are finite, unless explicitly mentioned otherwise. Given a digraph $D$ we denote by $V(D)$ its vertex set, and by $A(D)$ the set of its arcs. Given $U \subseteq V(D)$ we denote by $N^{+}(U)$ the external out-neighborhood of $U$, namely the set of vertices not in $U$ which are endpoints of arcs with the start vertices in $U$. $N^{-}(U)$ is similarly defined as the external in-neighborhood. Given $u,v \in V(D)$ we write $d(u,v)$ for the directed distance from $u$ to $v$, namely the length of a shortest directed path from $u$ to $v$ (setting it to be infinity if such a path does not exist). The directed diameter of $D$ is the maximum of $d(u,v)$ over all $u,v \in V(D).$ A digraph is said to be strongly connected if its diameter is finite. It is said to be connected if its underlying graph, obtained by removing the directions from all edges, is connected. It is not hard to see that if $D$ is a vertex transitive digraph then these two notions coincide. Given an undirected graph $G$ and vertices $u,v\in V(G)$, we denote by $d(u,v)=d(v,u)$ the distance between $u$ and $v$ in $G$. The diameter of $G$ is $\sup_{u,v\in V(G)}d(u,v)$.

\section{Finding long paths and cycles in vertex transitive digraphs}

In this section, we prove \Cref{thm:cycles-long}. We begin here with a brief sketch and overview of the proof. 

The proof naturally splits into two parts depending on how large the directed diameter of our graph is. We deal with the case when the diameter is small in \Cref{subs:expansion}. In this case, we will be able to show that the graph must have some weak (sublinear) expansion properties (see \Cref{lem:transitive-implies-expander}) which combined with a directed variant of a DFS-type argument for finding long paths or cycles in expanders, gives us a long cycle (see \Cref{lem:long-cycles-in-expanders}). We deal with the case of large directed diameter in \Cref{subs:cycle-graph}. In this case, a key idea is to reduce our directed problem to an undirected one. To do so, we introduce an auxiliary undirected graph (the cycle graph, see \Cref{defn:cycle-graph}). We show that (unless our digraph already has a long directed cycle) our auxiliary graph inherits the large diameter property (see \Cref{lem:diameter-in-cycle-graph}). It also inherits a slightly weaker version of vertex transitivity (which we dub ``near transitivity'', see \Cref{defn:near-transitive}). While a long cycle in our auxiliary undirected graph might not on its own translate back to a long directed cycle in the original digraph, we show that a long \emph{induced} cycle does suffice (see \Cref{lem:induced-to-directed-cycles}). Finally, we show that in a nearly transitive graph with a large diameter, one can, in fact, find a long induced cycle (see \Cref{lem:long-cycle-in-cycle-graphs}), which completes the proof.

\subsection{Expansion properties of vertex transitive digraphs}\label{subs:expansion}

In this subsection, we will deal with the small diameter case of \Cref{thm:cycles-long}. We start by introducing the notion of graph expansion that we will work with. 

\begin{definition}\label{defn:expansion}
A digraph $D$ is an \emph{$\alpha$-expander} if $|V(D)|\geq 2$ and for every $U \subseteq V(D)$ with $|U| \le \frac23|V(D)|$, we have $|N^{+}(U)| \ge \alpha |U|$ and $|N^{-}(U)| \ge \alpha |U|$.
\end{definition}

The following lemma shows that vertex transitive digraphs with small diameter must have some weak expansion properties.

\begin{lemma}\label{lem:transitive-implies-expander}
  Let $D$ be a connected, vertex transitive digraph with directed diameter $d$. Then, $D$ is a $\frac{1}{3d}$-expander.
\end{lemma}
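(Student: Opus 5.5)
The plan is the standard Babai/Aldous-type double-counting argument for vertex transitive graphs, adapted to directed distances. Let $n=|V(D)|$ and fix $U\subseteq V(D)$ with $|U|\le \frac23 n$. I will bound $|N^+(U)|$ from below; the bound for $N^-(U)$ follows by applying the same argument to the reverse digraph, which is again connected, vertex transitive and has the same directed diameter $d$. First, $n\ge 2$ and connectivity give $d\ge 1$, and $U$ can be assumed nonempty.

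\textbf{Step 1: a boundary-crossing lemma.} For any $x\in U$ and $y\notin U$, a shortest directed path from $x$ to $y$ has length at most $d$. It leaves $U$ at some first arc $(a,b)$ with $a\in U$ and $b\in N^+(U)$. Hence every pair $(x,y)\in U\times (V\setminus U)$ is ``witnessed'' by some vertex $b\in N^+(U)$ lying on a path of length at most $d$ from $x$ to $y$.

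\textbf{Step 2: translate and count.} Let $\Gamma$ be the automorphism group. I will count triples $(g,x,y)$ with $g\in\Gamma$, $x\in U$, $y\notin U$, along the following lines. Fix a vertex $v_0$ and a set of $n$ ordered pairs $(v_0,w)$, one for each $w$. For each $w\ne v_0$ fix a shortest directed path $P_w$ from $v_0$ to $w$ (length $\le d$). For $g\in\Gamma$, the image $g(P_w)$ is a path from $g v_0$ to $g w$. Count pairs $(g,w)$ such that $gv_0\in U$ and $gw\notin U$; each such $g(P_w)$ contains an arc from $U$ to $N^+(U)$, so some internal position $i\le d$ on $P_w$ is mapped into $N^+(U)$. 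Since $\Gamma$ is transitive, for a fixed vertex $z$ and fixed $u$, $|\{g: gz=u\}|=|\Gamma|/n$. Therefore the number of pairs $(g,w)$ is at most $\sum_w \sum_{i\le d} |\{g: g(P_w(i))\in N^+(U)\}| = n\cdot d\cdot |N^+(U)|\,|\Gamma|/n$. On the other hand, it equals $\sum_w |\{g: gv_0\in U,\ gw\notin U\}|$.

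\textbf{Step 3: lower bound, which is the main obstacle.} I need $\sum_w |\{g: gv_0\in U, gw\notin U\}|\ge |\Gamma|\,|U|(n-|U|)/n^{...}$ in the right normalization. Averaging over $g$ with $gv_0=u$ fixed, $w$ ranges over all vertices and $gw$ ranges bijectively over $V$. So for each $g$ with $gv_0\in U$, the number of $w$ with $gw\notin U$ is exactly $n-|U|$. The total is therefore $|\Gamma|\frac{|U|}{n}(n-|U|)$. Combining the two counts gives
\[ d\,|N^+(U)|\,|\Gamma| \ge |\Gamma|\frac{|U|(n-|U|)}{n}\ge |\Gamma|\frac{|U|}{3}, \]
using $n-|U|\ge n/3$. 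Thus $|N^+(U)|\ge |U|/(3d)$, as required. The delicate point is bookkeeping in Step 2. Each position index $i$ on $P_w$ is a single vertex, so the count $|\{g: g(P_w(i))\in N^+(U)\}|=|N^+(U)||\Gamma|/n$ holds exactly. The number of positions to charge is at most $d$, taking the first exit vertex, which lies among positions $1,\dots,d$. So the overcount factor is only $d$, and the constant $\frac{1}{3d}$ comes out exactly.
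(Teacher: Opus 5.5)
Your proof is correct, and it uses the same three ingredients as the paper's proof: averaging over automorphisms to get $|\Gamma|\,|U|(n-|U|)/n\ge |\Gamma|\,|U|/3$ crossing configurations, a directed path of length at most $d$, and the fact that each fibre $\{g: g(z)=u\}$ has size $|\Gamma|/n$. The organization differs. The paper first isolates a group-theoretic statement (\Cref{lem:cayleydigraph}). There it picks one element $g$ with $|Xg\setminus X|\ge |X|/3$, writes that $g$ as a product of at most $d$ generators, and pigeonholes over the telescoping differences $Xg_i\setminus Xg_{i-1}$. It then lifts this to $D$ via $X=\{\phi:\phi(v)\in U\}$ and the set of automorphisms sending $v$ to an out-neighbour of $v$, which requires factoring an arbitrary automorphism as $(\phi_0^{-1}\phi_1)\circ\cdots\circ(\phi_{t-1}^{-1}\phi_t)$. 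You instead do one global double count over all pairs $(g,w)$ with a fixed family of shortest paths $P_w$. Your sum over all targets $w$ is the averaged form of the paper's choice of a single good $g$, and your injective first-exit charging replaces the telescoping inclusion. The paper's route yields a reusable lemma about groups and a slightly stronger conclusion: a single generator $s$, i.e.\ a single $\mathrm{Aut}(D)$-orbit of arcs, already supplies $|U|/(3d)$ vertices of $N^+(U)$. Yours avoids the generator factorization and the passage back and forth between subsets of $\Gamma$ and of $V(D)$. It also handles $N^-(U)$ explicitly via the reverse digraph, whereas the paper only writes out the $N^+$ case. Two cosmetic slips: the first exit vertex need not be internal to $g(P_w)$, since it may be $gw$ itself, but your range $1\le i\le d$ already covers this. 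And the placeholder normalization at the start of Step~3 should simply read $|\Gamma|\,|U|(n-|U|)/n$, which is what you then compute.
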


To prove this lemma, we will extend the proof of a classical result due to Babai on the vertex-expansion of vertex transitive graphs~\cite{Babai91} to the setting of vertex transitive digraphs. As in Babai's proof, it will be convenient to first consider the special case of Cayley digraphs and then reduce the general case to it. Recall that given a finite group $(\Gamma,\cdot)$ and a set of generators $S$ of $\Gamma$, the \emph{Cayley digraph} $\mathrm{Cay}(\Gamma,S)$ has vertex-set $\Gamma$ and an arc $(x,y)$ from a group element $x$ to another group element $y$ if and only if $x^{-1}y\in S$. The following statement implies that Cayley digraphs are $\frac{1}{3d}$-expanders, where $d$ denotes the directed diameter of the Cayley digraph.

\begin{lemma}\label{lem:cayleydigraph}
Let $\Gamma$ be a finite group, $d\in \mathbb{N}$ and suppose that $S\subseteq \Gamma$ is such that every element of $\Gamma$ can be written as a product of at most $d$ elements of $S$ (empty product and repetitions allowed). Then, for every subset $X\subseteq \Gamma$ with $|X|\le \frac{2}{3}|\Gamma|$ there exists some $s\in S$ such that $|Xs\setminus X|\ge \frac{|X|}{3d}$. 
\end{lemma}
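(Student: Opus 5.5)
Throughout, write $|X|=k\le \frac23|\Gamma|$ and $m=\max_{s\in S}|Xs\setminus X|$. The plan is to adapt Babai's averaging argument, using right multiplication. The basic inequality is that right translates are subadditive in the following sense. For any $g,h\in\Gamma$,
\[
|Xgh\setminus X| \le |Xgh\setminus Xh| + |Xh\setminus X| = |Xg\setminus X| + |Xh\setminus X|,
\]
because right multiplication by $h$ is a bijection. By induction on the length of a word, every $g=s_1\cdots s_\ell$ with $\ell\le d$ and $s_i\in S$ then satisfies $|Xg\setminus X|\le \ell m\le dm$. Since every element of $\Gamma$ is such a product, we get $|Xg\setminus X|\le dm$ for all $g\in\Gamma$.

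The second step is to average $|Xg\setminus X|$ over $g\in\Gamma$. We have $|Xg\cap X| = \#\{(x,y)\in X^2 : x^{-1}y=g\}$. Summing over $g$ gives $\sum_g |Xg\cap X| = k^2$, and hence
\[
\frac{1}{|\Gamma|}\sum_{g}|Xg\setminus X| = k - \frac{k^2}{|\Gamma|} = k\left(1-\frac{k}{|\Gamma|}\right).
\]
So some $g$ has $|Xg\setminus X|\ge k(1-k/|\Gamma|)$. Combining this with the first step yields $dm\ge k(1-k/|\Gamma|)$.

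This settles the case $k\le \frac23|\Gamma|$ only with the weaker bound $m\ge k/(3d)$. Indeed, when $k\le\frac23|\Gamma|$ we have $1-k/|\Gamma|\ge \frac13$, so $dm\ge k/3$, i.e.\ $m\ge \frac{k}{3d}$. That is exactly the claimed bound, so no case split is needed and the constant $\frac23$ is what produces the $3$.

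I do not expect a serious obstacle. The only care needed is in the orientation. The arcs are $x\to xs$, so $Xs$ is the out-neighbourhood direction. The subadditivity must use right translation invariance, $|Ag\setminus Bg|=|A\setminus B|$, which holds in any group. The empty product, i.e.\ $g=e$, gives $0$ and is harmless. Repetitions are covered by the same induction.
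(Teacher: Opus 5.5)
Your proof is correct and is essentially the paper's argument. The paper also averages $|Xg\setminus X|$ over uniformly random $g\in\Gamma$ to get a $g$ with $|Xg\setminus X|\ge\frac13|X|$, and then uses the telescoping inclusion $Xg\setminus X\subseteq\bigcup_i (Xg_i\setminus Xg_{i-1})$ along a word of length at most $d$ together with right-translation invariance, which is exactly your subadditivity step; the only difference is that you bound $|Xg\setminus X|\le dm$ for every $g$, whereas the paper fixes the good $g$ and pigeonholes over the letters of its word (and your phrase ``weaker bound'' is just a slip of wording, since $k/(3d)$ is exactly the target).
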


\begin{proof}
We start by observing that there exists some $g\in \Gamma$ such that $|Xg\setminus X|\ge \frac{1}{3}|X|$. 
To see this, note that if we sample $g\in \Gamma$ uniformly at random, then for every $h\in \Gamma$ the probability that $h\in Xg$ equals $\frac{|X|}{|\Gamma|}$, and hence the expected size of $|Xg\setminus X|$ is exactly $\frac{|X|}{|\Gamma|}(|\Gamma|-|X|)\ge \frac{1}{3}|X|$. Thus, such a $g\in\Gamma$ must exist.

Let now $s_1,\ldots,s_t\in S$ with $t\le d$ be such that $g=s_t\cdots s_1$. For $0\le i \le t$ define $g_i:=s_i\cdots s_1$. We then have 
$$Xg\setminus X\subseteq \bigcup_{i=1}^{t}(Xg_i\setminus Xg_{i-1}).$$
Hence, there exists some $i\in [t]$ such that $|Xg_i\setminus Xg_{i-1}|\ge \frac{1}{t}|Xg\setminus X|\ge \frac{1}{3d}|X|$. This shows that $|Xs_i\setminus X|=|(Xs_i\setminus X)g_{i-1}|=|Xg_i\setminus Xg_{i-1}|\ge \frac{1}{3d}|X|$. Since $s_i\in S$, this concludes the proof.
\end{proof}

The desired result for the expansion of vertex transitive digraphs is now an easy consequence of the above.

\begin{proof}[Proof of Lemma~\ref{lem:transitive-implies-expander}]
Fix an arbitrary vertex $v\in V(D)$, let $\Gamma$ denote the automorphism group of $D$, and let $S\subseteq \Gamma$ be defined as the set of all automorphisms $\phi\in \Gamma$ such that $\phi(v)$ is an out-neighbor of $v$ in $D$. We claim that every element of $\Gamma$ can be written as a composition of at most $d$ elements of $S$. Indeed, let $\phi\in \Gamma$ be given arbitrarily. Then, since $D$ has directed diameter $d$, there exists some $t\le d$ and a sequence of vertices $v=x_0,x_1,\ldots,x_t=\phi(v)$ such that $(x_{i-1},x_i)\in A(D)$ for all $i\in [t]$. For $i=0,\ldots,t$, let $\phi_i$ denote an automorphism such that $\phi_i(v)=x_i$, where we let $\phi_0$ be the identity and $\phi_t:=\phi$. Then we can see that 
$$\phi=\phi_0^{-1}\circ\phi_t=(\phi_0^{-1}\circ \phi_{1})\circ (\phi_{1}^{-1}\circ \phi_{2})\circ \cdots \circ (\phi_{t-1}^{-1}\circ \phi_t).$$
By construction each of the automorphisms $\phi_{i-1}^{-1}\circ \phi_{i}$ for $i\in [t]$ sends $v$ to an out-neighbor of $v$. Hence, we have written $\phi$ as a composition of at most $d$ members of $S$, as desired. 

Let $n:=|V(D)|$ and let $U\subseteq V(D)$ be an arbitrary subset with $|U|\le \frac{2}{3}n$. Let $X\subseteq \Gamma$ be defined as the set of all automorphisms $\phi\in \Gamma$ such that $\phi(v)\in U$. Observe that $|X|=\frac{|U|}{n}\cdot|\Gamma|\le \frac{2}{3}|\Gamma|$. We can now apply Lemma~\ref{lem:cayleydigraph} to $\Gamma$, the set $S$ and the set $X$ to find that there exists some $s\in S$ such that $|Xs\setminus X|\ge \frac{1}{3d}|X|$. In other words, there exist at least $\frac{1}{3d}|X|$ automorphisms $\phi$ of $D$ with $\phi(v)\in U$ and $\phi(s(v))\notin U$. By the definition of $S$, in this situation we then always have $\phi(s(v))\in N_D^+(U)$. Since $D$ is vertex transitive, for every vertex $x\in N_D^+(U)$ there are exactly $\frac{|\Gamma|}{n}$ automorphisms $\phi$ with $\phi(s(v))=x$. Hence, the above yields at least $\frac{\frac{1}{3d}|X|}{|\Gamma|/n}=\frac{\frac{1}{3d}\frac{|U|}{n}|\Gamma|}{|\Gamma|/n}=\frac{1}{3d}|U|$ distinct vertices in $N_D^+(U)$. This establishes the desired inequality $|N_D^+(U)|\ge \frac{1}{3d}|U|$, concluding the proof.
\end{proof}

The following lemma guarantees a long cycle in an expanding digraph. It is a directed variant of the so-called DFS lemma, see \cite{michael-survey}.

\begin{lemma}\label{lem:long-cycles-in-expanders}
  An $n$-vertex $\alpha$-expander $D$ with $\alpha>0$ contains a directed cycle of length at least $\frac{\alpha n}{3}$.
\end{lemma}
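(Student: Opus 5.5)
We will run a directed depth-first search on $D$ and track the standard three-set partition. At every moment there is a set $S$ of fully explored vertices, a set $T$ of unvisited vertices, and the current DFS stack $P$. The stack forms a directed path $P=p_1p_2\cdots p_k$ in which each $p_{i+1}$ is an out-neighbour of $p_i$. At each step the search does one of two things. If the top $p_k$ has an out-neighbour in $T$, that neighbour is pushed onto $P$. Otherwise $p_k$ is popped from $P$ and moved to $S$; if $P$ then becomes empty, an arbitrary vertex of $T$ is pushed. The key invariant, exactly as in the undirected DFS lemma, is that there are no arcs from $S$ to $T$. Indeed, a vertex enters $S$ only when it has no out-neighbour in $T$, and $T$ only shrinks afterwards. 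Hence $N^+(S)\subseteq P$ at all times, which is the directed analogue we need.

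Next we locate a moment where $|S|$ sits at a suitable fraction of $n$. Each step changes $|S|$ or $|T|$ by exactly one: $|S|$ starts at $0$ and ends at $n$, while $|T|$ starts at $n$ and ends at $0$. So we take the first moment when $|S|\ge |T|$, or more precisely a moment chosen so that both $|S|$ and $|T|$ are comparable to $n/3$. Write $k=|P|$. If $|S|\le\frac23 n$ and $|S|\ge \frac{n-k}{2}$, expansion gives
\[ k=|P| \ge |N^+(S)| \ge \alpha |S| \ge \alpha\frac{n-k}{2}, \]
so $P$ is a directed path on roughly $\alpha n/3$ vertices. Symmetrically, we can use that there are no arcs into $S$ from $T$ reversed, i.e.\ $N^-(T)\subseteq P$. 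This lets us apply in-expansion to $T$ when $|T|\le \frac23 n$.

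The main obstacle is converting this long path into a long cycle, since Figure~\ref{fig:disjoint} shows paths alone do not suffice. Here we exploit expansion in the other direction at the same moment. Let $P'$ be the last $\lceil \alpha n/3\rceil$-ish portion of $P$ (the top of the stack), and consider $S\cup P''$, where $P''$ is the initial segment of $P$ before $P'$. Since $S$ has no arcs into $T$, the out-neighbourhood of $S\cup P''$ lies in $P'$ together with whatever $P''$ sends into $T$. Instead I would apply in-expansion to the set $T$. Any in-neighbour of $T$ lies on $P$, and every vertex of $P$ except the top $p_k$ has, at the time its successor was pushed, possibly further neighbours in $T$, so this is not immediately useful.

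Cleaner plan: take the first moment $|S|$ reaches about $n/3$. Then apply out-expansion to the set $U=S\cup\{p_1,\dots,p_{j}\}$ for $j=k-\lceil\alpha n/3\rceil$. Since $|U|\le \frac23 n$ and $|U|\ge|S|$, we get $|N^+(U)|\ge\alpha n/3$ or so. This neighbourhood lies in $\{p_{j+1},\dots,p_k\}\cup T$. If some $p_i$ with $i\le j$ has an out-arc back to... — this still goes the wrong way for closing a cycle.

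The fix is to instead use in-arcs: we need an arc from a late vertex $p_i$ to an early vertex $p_{i'}$ with $i-i'\ge \alpha n/3$. So apply in-expansion to $U=S\cup\{p_1,\dots,p_j\}$, using the fact that arcs into $S$ from $T$ are allowed but arcs from $S$ to $T$ are not. This asymmetry is the crux. I would therefore rerun the DFS on the reversed digraph if needed, so that the invariant reads ``no arcs from $T$ to $S$''. With that invariant, $N^-(U)\subseteq \{p_{j+1},\dots,p_k\}\cup N^-_T(\{p_1,\dots,p_j\})$. Then I would choose the moment and $j$, possibly by averaging over $j$, so that some arc goes from $\{p_{j+1},\dots,p_k\}$ back into $\{p_1,\dots,p_j\}$ at distance at least $\alpha n/3$ along $P$. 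Such a back-arc closes the required long directed cycle. The counting that makes the $T$-part negligible is the delicate step I expect to be hardest.
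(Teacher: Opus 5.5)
Your proposal has a genuine gap. What you actually establish is the directed DFS lemma, which gives a long directed \emph{path} (the engine behind \Cref{thm:paths-long}). The step this lemma is really about, closing a long \emph{cycle}, remains an unexecuted plan, and that plan targets the wrong structure.

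You look for a long chord of the stack path $P$, but in-expansion of $U=S\cup\{p_1,\dots,p_j\}$ does not produce one. A vertex $p_i$ with $i>j$ can lie in $N^-(U)$ solely because of an arc into $S$, and the in-neighbours in $T$ are uncontrolled. So no averaging over $j$ forces an arc between the two parts of $P$. Moreover, once you run DFS on the reversed digraph, the arcs of $P$ in $D$ are $p_{i+1}\to p_i$. An arc of $D$ from $\{p_{j+1},\dots,p_k\}$ into $\{p_1,\dots,p_j\}$ is then a shortcut parallel to $P$ and closes no cycle at all.

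The underlying issue is what expansion actually gives you: many vertices of $P$ receive arcs from (or send arcs to) the explored set. To turn such an arc into a long cycle, its endpoint in $S$ must be reachable from a part of $P$ far beyond where the arc lands. The invariant ``no arcs from $S$ to $T$'' gives no control over this.

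The missing idea is to expand a set $U$ with two properties:
\begin{itemize}
\item[(a)] $U$ is closed under out-neighbours in $D-V(P)$, so $N^+(U)\subseteq V(P)$;
\item[(b)] every vertex of $U$ is reachable from the \emph{last} vertex $v_t$ of $P$ by a directed path meeting $P$ only in $v_t$.
\end{itemize}
Then, if $v_j$ is the first vertex of $P$ in $N^+(U)$, the cycle $v_j v_{j+1}\cdots v_t\to(\text{path in }U)\to v_j$ contains all of $N^+(U)$. Hence it has length at least $\alpha|U|$.

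The paper obtains $\frac n3\le|U|\le\frac{2n}{3}$ by building $P$ greedily. It always steps to an out-neighbour that still has at least $\frac{2n}{3}$ descendants once the current path is deleted. When this fails at $v_t$, each out-neighbour of $v_t$ off $P$ has at most $\frac{2n}{3}$ descendants in $D-V(P)$, while $v_t$ itself has at least $\frac{2n}{3}$ descendants in $D-\{v_1,\dots,v_{t-1}\}$. So the union of the descendant sets of a suitable subset of these out-neighbours is the required $U$, and (a) and (b) hold by construction.

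In your DFS language, consider the moment $p_k$ is about to be popped, and take the set of vertices other than $p_k$ reachable from $p_k$ in $D-\{p_1,\dots,p_{k-1}\}$. This set lies in $S$ and does satisfy (a) and (b). What your proposal lacks is any mechanism ensuring such a set has size between $\frac n3$ and $\frac{2n}{3}$. That size control is exactly what the descendant-count rule supplies.
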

\begin{proof}
Let $v_1$ be an arbitrary vertex in $D$. Since $D$ is an $\alpha$-expander, $D$ is strongly connected and every vertex in $D$ is a descendant of $v_1$. As long as we can, we repeat the following process. Given a directed path $v_1v_2\ldots v_i,$ if there exists an outneighbor of $v_i$ that is not in $\{v_1,\ldots, v_{i-1}\}$ and has at least $\frac{2n}{3}$ descendants in the digraph induced on $V(D) \setminus \{v_1,\ldots, v_{i}\}$, we choose such an outneighbor $v_{i+1}$ and continue. This process must eventually stop since the size of the induced subdigraph we consider decreases by one in each step. Let us suppose this process stops after $t$ steps with a path $P=v_1v_2\ldots v_t$ with $v_t$ having at least $\frac{2n}{3}$ descendants in the subdigraph induced on $V(D) \setminus \{v_1,\ldots, v_{t-1}\}$ but each of its outneighbors not on $P$ has at most $\frac{2n}{3}$ descendants in the subdigraph induced on $V(D) \setminus \{v_1,\ldots, v_{t}\}$. This implies that there is a subset $S$ of outneighbors of $v_t$ in $D-V(P)$ such that the set $U$ of all descendants (in $D-V(P)$) of the vertices in $S$ has at least $\frac{n}{3}$ and at most $\frac{2n}{3}$ vertices. 
Since $D$ is an expander, $U$ has at least $\frac{\alpha n}{3}$ outneighbors in $D$. All these outneighbors must be in $P$, so taking the one that is closest to $v_1$ on $P$, gives us a cycle in $D$ of at least that length.
\end{proof}

As we shall see next, combining the above two lemmas already yields a short proof of the aforementioned variant of \Cref{thm:cycles-long} where we seek a path instead of a cycle.

\begin{theorem}\label{thm:paths-long}
    In any connected vertex transitive digraph on $n$ vertices, there is a directed path of length at least $\Omega(n^{1/2})$.
\end{theorem}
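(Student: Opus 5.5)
We split into two cases according to the directed diameter $d$ of the digraph $D$, with threshold $\sqrt{n}$.

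\emph{Large diameter.} Suppose $d \ge \sqrt{n}$. Then there are vertices $u,v$ with $d(u,v)=d$. Since $D$ is connected and vertex transitive, it is strongly connected, so a shortest directed path from $u$ to $v$ exists. A shortest path is automatically a (simple) directed path, and it has length $d \ge \sqrt{n}$. This case needs nothing further.

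\emph{Small diameter.} Suppose $d < \sqrt{n}$. We may assume $n\ge 2$ and $d\ge 1$, since otherwise the statement is trivial. By \Cref{lem:transitive-implies-expander}, $D$ is a $\frac{1}{3d}$-expander. Applying \Cref{lem:long-cycles-in-expanders} with $\alpha = \frac{1}{3d}$ then yields a directed cycle of length at least
\[
\frac{\alpha n}{3} = \frac{n}{9d} > \frac{\sqrt{n}}{9}.
\]
Deleting one arc of this cycle leaves a directed path of length at least $\frac{\sqrt{n}}{9}-1$.

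\emph{Conclusion.} In either case $D$ contains a directed path of length $\Omega(\sqrt{n})$. For small $n$ the bound holds trivially: since $D$ is strongly connected with $n \ge 2$, it has at least one arc, which is a path of length $1$.

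No step is a real obstacle here, because the two earlier lemmas do all the work. The only point needing care is the choice of threshold: balancing $d$ against $n/d$ gives $d=\sqrt{n}$. This explains why we obtain paths of length $\sqrt{n}$ but not cycles. In the large-diameter case a geodesic gives a long path but not a long cycle, and that is the case the paper treats separately via the cycle graph to reach $n^{1/3}$.
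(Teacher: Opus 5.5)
Your proposal is correct and is essentially the paper's argument: the paper combines \Cref{lem:transitive-implies-expander} and \Cref{lem:long-cycles-in-expanders} to get a cycle of length $n/(9d)$, and a geodesic of length $d$. It then bounds $\max\{n/(9d),d\}\ge\sqrt{n}/3$ via the geometric mean rather than splitting cases at $d=\sqrt{n}$. Your explicit loss of one arc when passing from a cycle to a path is a small detail the paper glosses over.
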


\begin{proof}
Let $D$ be any given connected vertex transitive digraph on $n$ vertices. One easily checks that then $D$ must also be regular (i.e., there exists some $r\in \mathbb{N}$ such that every vertex has out- and in-degree exactly $r$). Indeed, vertex-transitivity directly implies that there exist numbers $r_1, r_2$ such that every vertex of $D$ has out-degree $r_1$ and in-degree $r_2$. But then the number of arcs in $D$ equals $r_1n$ and $r_2n$, so we must have $r_1=r_2$, as desired. In particular, $D$ is an Eulerian digraph and hence strongly connected. Let $d\in \mathbb{N}$ denote the directed diameter of $D$. By Lemma~\ref{lem:transitive-implies-expander} we have that $D$ is a $\frac{1}{3d}$-expander, and by applying Theorem~\ref{lem:long-cycles-in-expanders} we find that $D$ contains a directed cycle (and hence path) of length at least $\frac{n}{9d}$. On the other hand, since $D$ is strongly connected with diameter $d$, there exists a directed path of length $d$ in $D$. All in all, it follows that the maximum length of a directed path in $D$ is at least 
$$\max\left\{\frac{n}{9d},d\right\}\ge \sqrt{\frac{n}{9d}\cdot d}=\frac{1}{3}\sqrt{n}.$$ This concludes the proof of the theorem.
\end{proof}

We note that we may apply this theorem in the undirected setting as well (by considering the vertex transitive digraph obtained by replacing every edge with two arcs joining its two vertices, one in each direction). Since in a vertex transitive graph the length of a longest path and cycle differs only by a constant factor \cite{path-vs-cycle}, this gives a new proof of the result of Babai from 1979, guaranteeing the existence of a cycle of length $\Omega(\sqrt{n})$ in any connected vertex transitive graph \cite{babai}. 

\subsection{The cycle graph}\label{subs:cycle-graph}

The following definition of an auxiliary graph is going to prove very useful in finding long cycles.

\begin{definition}\label{defn:cycle-graph}
Given a digraph $D$, its \emph{cycle graph} $C(D)$ is the graph whose vertex-set consists of all directed cycles of $D$, with two of them adjacent if and only if they intersect in at least one vertex.
\end{definition}

We next prove two lemmas relating properties of this auxiliary graph to the original digraph.

\begin{lemma}\label{lem:diameter-in-cycle-graph}
For any connected vertex transitive digraph $D$ with directed diameter $d$ and circumference $\ell$ its cycle graph is connected with diameter at least $\frac{d}{\ell}-1.$
\end{lemma}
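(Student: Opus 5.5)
Two things have to be shown: that $C(D)$ is connected, and that its diameter is at least $\frac{d}{\ell}-1$. Both come from the fact that, since $D$ is connected and vertex transitive, it is regular and hence strongly connected, as noted in the proof of \Cref{thm:paths-long}.

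\emph{Connectivity.} In a strongly connected digraph every arc lies on a directed cycle: for an arc $(u,v)$, append a shortest directed path from $v$ back to $u$. We may assume $n\ge 2$ (otherwise there is nothing to prove), so every vertex lies on some directed cycle. Let $C,C'$ be two directed cycles, with $u\in V(C)$ and $u'\in V(C')$. Take a directed path $u=x_0x_1\ldots x_m=u'$. For each $i$, let $C_i$ be a directed cycle through the arc $(x_{i-1},x_i)$. Consecutive cycles $C_i$ and $C_{i+1}$ share the vertex $x_i$. Also $C$ meets $C_1$ in $x_0$, and $C_m$ meets $C'$ in $x_m$. So $C, C_1,\ldots,C_m,C'$ is a walk in $C(D)$, and $C(D)$ is connected.

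\emph{Diameter.} Choose vertices $u,v$ with $d(u,v)=d$. Let $C$ be a directed cycle through $u$ and $C'$ a directed cycle through $v$. Suppose $C=Z_0,Z_1,\ldots,Z_k=C'$ is a shortest path in $C(D)$, so that $Z_{i-1}$ and $Z_i$ share some vertex $y_i$. Set $y_0=u$ and $y_{k+1}=v$. For each $i=0,\ldots,k$, both $y_i$ and $y_{i+1}$ lie on the directed cycle $Z_i$. Walking along $Z_i$ gives a directed path from $y_i$ to $y_{i+1}$ of length at most $|Z_i|-1\le \ell-1$. Concatenating these paths gives a directed walk from $u$ to $v$ of length at most $(k+1)(\ell-1)<(k+1)\ell$. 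Hence $d\le (k+1)\ell$, so $k\ge \frac{d}{\ell}-1$. Since $k$ is the distance in $C(D)$ between $C$ and $C'$, the diameter of $C(D)$ is at least $\frac{d}{\ell}-1$.

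There is no real obstacle here; the only point needing care is the observation that in a strongly connected digraph every arc, and hence every vertex, lies on a directed cycle.
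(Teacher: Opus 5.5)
Your proof is correct and follows essentially the same route as the paper's. Connectivity comes from a chain of directed cycles through the arcs of a directed path. The diameter bound comes from concatenating segments of the cycles along a shortest path in $C(D)$ into a directed walk from $u$ to $v$ of length at most $(k+1)\ell$. Your slightly sharper bound $(k+1)(\ell-1)$ and your explicit justification that every arc lies on a directed cycle are only minor refinements of the same argument.
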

\begin{proof}
Suppose $C_1, C_2 \in C(D)$ and fix $v_1\in C_1, v_2 \in C_2$. Since $D$ is strongly connected, there is a directed path with vertex sequence $v_1=u_0,u_1,\ldots,u_\ell=v_1$ from $v_1$ to $v_2$ in $D$. Again by strong connectivity, for each $1\le i \le\ell$ there exists a directed cycle $C_i'$ in $D$ through the arc $(u_{i-1},u_i)$. Now, the sequence $C_1,C_1',\ldots,C_\ell',C_2$ yields a walk in $C(D)$ from $C_1$ to $C_2$. This proves that $C(D)$ is connected. 

For the diameter bound, take $v_1,v_2 \in V(D)$ such that the shortest directed path from $v_1$ to $v_2$ in $D$ has length $d$, and take arbitrary directed cycles $C_1$ containing $v_1$ and $C_2$ containing $v_2$. Let $C_0',C_1',\ldots, C_{t}'$ be a shortest path with $C'_0=C_1$ and $C'_t=C_2$ in $C(D)$. Starting at $v_1$ and following $C_0'$ until we reach the first vertex of $C_1'$ and then following $C_1'$ until we reach the first vertex of $C_2'$ and so on, we obtain a directed walk from $v_1$ to $v_2$ of length at most $(t+1) \ell$ in $D$. Since $v_1,v_2$ are at directed distance $d$ in $D$, this implies $t \ge \frac{d}{\ell}-1$, as claimed.
\end{proof}

\begin{lemma}\label{lem:induced-to-directed-cycles}
For any digraph $D$, if $C(D)$ contains an induced cycle of length $\ell\ge 4$, then $D$ contains a directed cycle of length at least $\ell$.
\end{lemma}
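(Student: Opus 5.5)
The plan is to assemble one long closed directed walk that goes once around the induced cycle of $C(D)$, and then extract from it a directed cycle that still goes around; such a cycle is forced to meet $\ell$ distinct vertices. Let $C_1,\dots,C_\ell$ (indices mod $\ell$) be the induced cycle in $C(D)$. Then $C_i\cap C_{i+1}\neq\emptyset$, and $C_i\cap C_j=\emptyset$ whenever $i,j$ are not consecutive. Since $\ell\ge 4$, every vertex of $U:=\bigcup_i V(C_i)$ lies either in exactly one $C_j$ or in exactly two consecutive ones $C_j\cap C_{j+1}$.

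\textbf{Position map.} Define $p:U\to \mathbb{R}/\ell\mathbb{Z}$ by $p(v)=j$ if $v$ lies only in $C_j$, and $p(v)=j+\tfrac12$ if $v\in C_j\cap C_{j+1}$. Every arc $(u,w)$ of some $C_j$ has both endpoints in $C_j$. Hence $p(u),p(w)\in[j-\tfrac12,j+\tfrac12]$, and the arc gets a well-defined signed displacement $\delta(u,w)\in[-1,1]$: the difference of the lifts inside this interval. The key observation is that such an arc can never \emph{strictly} cross a half-integer point $j\pm\tfrac12$, since it stays in the closed interval. For a closed walk $W$ using only arcs of the $C_j$'s, define its winding number as $\frac1\ell\sum_{a\in W}\delta(a)$, which is an integer.

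\textbf{The walk.} For each $i$ pick $x_i\in C_i\cap C_{i+1}$, so $p(x_i)=i+\tfrac12$. Let $P_i$ be the directed subpath of $C_{i+1}$ from $x_i$ to $x_{i+1}$; these are distinct since $C_i\cap C_{i+2}=\emptyset$ for $\ell\ge4$. Along $P_i$ all positions lie in $[i+\tfrac12,i+\tfrac32]$, so the total displacement of $P_i$ is exactly $+1$. The concatenation $W=P_1P_2\cdots P_\ell$ is therefore a closed directed walk with winding number $1$.

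\textbf{Extraction and counting.} The arc multiset of a closed directed walk decomposes into directed cycles (repeatedly cut at a repeated vertex), and displacement is additive. So some directed cycle $Q\subseteq W$ of $D$ has nonzero winding number. Lift $Q$ to $\mathbb{R}$: its positions change by a total of at least $\ell$ in absolute value, with each step of size at most $1$ and never strictly jumping over a half-integer. Hence the lift passes through every half-integer in an interval of length $\ell$, so $Q$ contains vertices with positions $j+\tfrac12$ for all $\ell$ residues $j$. These vertices are pairwise distinct because they have distinct positions, so $|Q|\ge \ell$.

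The main obstacle is the bookkeeping in this last step: checking that $\delta$ is well defined for every arc that occurs. An arc might lie on two of the cycles, but then both endpoints are in $C_j\cap C_{j'}$ with $j,j'$ adjacent, and the two intervals give the same lift. One must also check that "no strict crossing of half-integers" really forces every half-integer value to be attained. The rest is routine.
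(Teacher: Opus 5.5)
Your argument is correct, and it takes a genuinely different route from the paper.

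\textbf{The paper's route.} The paper builds the cycle directly with a first-hit rule. It picks $v_1\in C_1\cap C_\ell$ and $v_2\in C_1\cap C_2$ consecutive along $C_1$, with no other vertex of $C_\ell\cup C_2$ between them. Then, from each $v_i$, it follows $C_i$ only until it first meets $C_{i+1}$, and closes along $C_\ell$. This choice makes consecutive segments share only their common endpoint. Non-consecutive segments are disjoint because the corresponding $C_i$ are. So the concatenation is already a simple directed cycle through the $\ell$ distinct vertices $v_1,\dots,v_\ell$.

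\textbf{Your route.} You instead allow arbitrary transition points $x_i$ and accept that the closed walk may self-intersect. You then repair this afterwards with the $\mathbb{R}/\ell\mathbb{Z}$-valued potential. The bookkeeping you flagged does go through:
\begin{itemize}
\item $\delta(u,w)$ is simply the unique representative of $p(w)-p(u)$ in $[-1,1]$. It is unique because $\ell\ge 4$, so it depends only on the arc, and winding numbers add over the cycle decomposition.
\item Take the first index at which the lift reaches a half-integer $h$. Since no step strictly jumps over $h$, the lift there equals $h$.
\item Using the half-open window $[a,a+\ell)$, where $a$ is the lift of $q_0$, handles the case where $q_0$ itself sits at a half-integer. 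This yields all $\ell$ residues.
\end{itemize}

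\textbf{Comparison.} The paper's argument is shorter and fully explicit, with no lifting or cycle decomposition. Yours avoids the careful choice of $v_1,v_2$ and the pairwise disjointness checks. It also makes transparent exactly where $\ell\ge 4$ is used: vertices lie in at most two consecutive $C_j$, and lifts are unique. Finally, it proves something slightly stronger: every directed cycle formed from arcs of the $C_j$ with nonzero winding number meets all $\ell$ sets $C_j\cap C_{j+1}$.
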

\begin{proof}

Let $C_1,\ldots, C_\ell \in C(D)$ be an induced cycle. Note that since $\ell \ge 4$, $C_2$ and $C_{\ell}$ are not adjacent in $C(D)$ and so $C_1 \cap C_\ell$ and $C_1 \cap C_2$ are disjoint. Pick vertices $v_1 \in C_1 \cap C_{\ell}$ and $v_2 \in C_1 \cap C_2$ that are joined by a directed path from $v_1$ to $v_2$ in $C_1$ with no other vertices of $C_\ell$ or $C_2$ on it. We now repeat the following for $2 \le i \le \ell-1$. Given $v_i \in C_{i} \cap C_{i-1}$ we follow $C_i$ until we first reach a vertex of $C_{i+1}$ which we set to be $v_{i+1}$. The key property we maintain by picking the first such vertex is that the path from $v_{i}$ to $v_{i+1}$ on $C_{i}$ does not contain other vertices of $C_{i+1}$, so it will in particular be disjoint from the path from $v_{i+1}$ to $v_{i+2}$ on $C_{i+1}$ (except for $v_{i+1}$). Finally, once we reach $v_{\ell}$ we simply close the cycle by walking along $C_{\ell}$ until $v_1$, which we can, since the path from $v_1$ to $v_2$ on $C_1$ does not contain any vertices of $C_{\ell}$ except for $v_1$. Hence, $v_1v_2\ldots v_{\ell}v_1$ is a directed cycle of length at least $\ell$ in $D$. 
\end{proof}

Vertex transitivity of a digraph $D$ might not be inherited in full by its cycle graph $C(D)$. It turns out that $C(D)$ does, in fact, inherit the following slightly weaker notion of transitivity. 

\begin{definition}\label{defn:near-transitive}
A graph $G$ is \emph{nearly transitive} if for any two vertices $v,u \in V(G)$, there exists an automorphism of $G$ mapping $v$ to $u$ or to a neighbor of $u$.
\end{definition}

Note that, in particular, a vertex transitive graph is nearly transitive. 
Near transitivity serves as a useful approximation of vertex transitivity for our purposes, in the sense that while in a vertex transitive graph one can map any vertex to any other vertex, in a nearly transitive graph one can map any vertex very close to any other vertex (namely, to a vertex at distance at most one). The following easy lemma shows that vertex transitivity of a digraph $D$ implies near transitivity of its cycle graph.

\begin{lemma}\label{lem:near-transitivyity-of-cycle-graphs}
	For any connected vertex transitive digraph $D$, its cycle graph $C(D)$ is nearly transitive.
\end{lemma}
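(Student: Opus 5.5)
The plan is to push automorphisms of $D$ down to automorphisms of $C(D)$, and then use the fact that every cycle of $D$ is adjacent in $C(D)$ to every cycle through any of its vertices.

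First, every automorphism $\phi$ of $D$ induces an automorphism $\hat\phi$ of $C(D)$. Indeed, $\phi$ maps arcs to arcs bijectively, so it maps each directed cycle $C$ of $D$ to a directed cycle $\phi(C)$. The map $C\mapsto\phi(C)$ is a bijection on the set of directed cycles, with inverse induced by $\phi^{-1}$. Since $\phi$ is a bijection on $V(D)$, we have $C\cap C'\neq\emptyset$ if and only if $\phi(C)\cap\phi(C')\neq\emptyset$. So adjacency in $C(D)$ is preserved in both directions, and $\hat\phi$ is an automorphism of $C(D)$.

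Now let $C,C'$ be two arbitrary vertices of $C(D)$, i.e.\ directed cycles of $D$. Pick any vertex $v\in C$ and any vertex $u\in C'$. By vertex transitivity of $D$ there is an automorphism $\phi$ with $\phi(v)=u$. Then $\phi(C)$ is a directed cycle containing $u$, so it meets $C'$ in $u$. Hence either $\phi(C)=C'$, or $\phi(C)$ is a neighbour of $C'$ in $C(D)$. Therefore $\hat\phi$ maps $C$ to $C'$ or to a neighbour of $C'$, which is exactly near transitivity as in Definition~\ref{defn:near-transitive}.

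There is essentially no obstacle here. The only points needing care are that $\hat\phi$ is genuinely a bijection preserving both adjacency and non-adjacency, and that $C(D)$ is nonempty. The latter holds because a connected vertex transitive digraph on at least two vertices is strongly connected (as noted in the proof of Theorem~\ref{thm:paths-long}), so every vertex lies on a directed cycle. Connectivity of $D$ is used only for this. The transitivity step itself needs nothing beyond the choice of $v\in C$ and $u\in C'$.
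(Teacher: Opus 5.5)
Your proposal is correct and follows the paper's proof: induce an automorphism of $C(D)$ from one of $D$, pick $v\in C$ and $u\in C'$, map $v$ to $u$, and note that $\phi(C)$ then meets $C'$ at $u$. Your added checks (that $\hat\phi$ is a bijection preserving adjacency and non-adjacency) are fine; the nonemptiness remark is not actually needed, since near transitivity holds vacuously for an empty graph.
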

\begin{proof}
	Note first that any automorphism $\phi:V(D) \to V(D)$ gives rise to a natural automorphism $\phi'$ of $C(D)$ which maps any $C \in V(C(D))$ to its image under $\phi$. 
	Now let $C_1,C_2 \in V(C(D))$ and pick $v \in C_1, u \in C_2$. Pick an automorphism $\phi$ of $D$ which maps $v$ to $u$. Now, $\phi'(C_1)$ is a cycle of $D$ passing through $u$, so in particular it intersects $C_2$ and is hence either equal to $C_2$ or adjacent to $C_2$ in $C(D)$.
\end{proof}

The following is the key lemma of this section, which we find interesting in its own right. We note that questions of similar flavor have been considered before, see e.g.\ \cite{closing-cycle,milanivc2025closing} (although with a very different focus). It guarantees the existence of a long induced cycle in a nearly transitive graph.

\begin{lemma}\label{lem:long-cycle-in-cycle-graphs}
	In any connected, nearly transitive graph $G$ with diameter $d\ge 20$, there is an induced cycle of length at least $d-17$.
\end{lemma}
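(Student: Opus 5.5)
The plan is to find a cycle passing through two vertices that are far apart, take a shortest such cycle, and show it is induced up to a bounded correction. Two facts from near transitivity drive everything.

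\emph{Eccentricity is almost constant.} Automorphisms preserve eccentricity, and adjacent vertices have eccentricities differing by at most one. So Definition~\ref{defn:near-transitive} gives that every vertex of $G$ has eccentricity in $\{d-1,d\}$.

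\emph{Reduction to a minimal cycle.} Fix $k=d-c$ for a suitable absolute constant $c$. Among all cycles $C$ of $G$ containing two vertices $x,y$ with $d_G(x,y)\ge k$, choose one of minimum length. I would then show, in order:
\begin{enumerate}
\item such a cycle exists;
\item $|C|$ is at most about $2d$;
\item $C$ has no chord, or at least any chord can only shorten $C$ by a bounded amount.
\end{enumerate}
From the last point, a cycle of length at least $2k-O(1)\ge d-17$ survives as an induced cycle, after adjusting constants.

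\emph{Chord analysis (step 3).} Suppose $pq$ is a chord of $C$. It splits $C$ into two strictly shorter cycles $C'$ and $C''$, both containing $p$ and $q$. By minimality, neither contains a pair at distance at least $k$. Hence $x$ and $y$ lie on different sides, say $x\in C'$ and $y\in C''$, and every pair within $C'$ or within $C''$ is at distance less than $k$. I would then exploit the eccentricity fact: a vertex $z$ of $C'$ has some vertex $w$ at distance at least $d-1$. I would either route $w$ onto $C$ to get a contradiction, or use the triangle inequality $d(x,y)\le d(x,p)+1+d(p,y)$ together with the fact that $x,p\in C'$ and $p,y\in C''$. The aim is to conclude that one of the arcs of $C$ has length at least about $k$. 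In that case $C$ itself, or the side cycle through a long arc, is already long. Then iterating over chords (always passing to the side that keeps a long geodesic-like arc) terminates in an induced cycle of length at least $d-17$.

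\emph{Existence and the main obstacle (steps 1 and 2).} The hardest part, I expect, is producing a cycle through two far-apart vertices in the first place, together with the length control that makes the chord bookkeeping work. Here I would use the block structure of $G$. Suppose every block of $G$ has diameter less than $k$. Then, since $d\ge 20$, a longest geodesic $P$ between $a$ and $b$ with $d(a,b)=d$ passes through cut vertices. Consider a leaf block $B$ of the block tree, chosen at the end of a longest path in that tree. Vertices in the interior of the block tree (near the ``center'') have eccentricity noticeably smaller than vertices in leaf blocks far from the center, roughly $d/2$ versus $d$, which contradicts the almost-constant eccentricity. So some block $B$ has diameter at least $k$ (with $k$ about $d-O(1)$). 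Inside the $2$-connected block $B$, take $x,y$ with $d_G(x,y)\ge k$. By Menger, some cycle of $B$ contains both. For the length bound, I would take two internally disjoint $x$–$y$ paths and shortcut them using geodesics inside $B$, to get a cycle of length $O(d)$. The constant $17$ should come out of tracking the $\pm1$ losses from near transitivity and the chord triangle inequalities.
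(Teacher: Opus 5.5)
Granting your steps 1 and 2, step 3 fails, and not because of constants. A shortest cycle through two far-apart vertices need not be close to induced, and splitting along its chords can destroy all of its length.

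Consider the prism over $C_{2m}$, i.e.\ two copies $\{(i,0)\}$, $\{(i,1)\}$ of $C_{2m}$ joined by the rungs $(i,0)(i,1)$. It is vertex transitive, hence nearly transitive, and has $d=m+1$. Take $k=d-c$ with $c\ge 2$.
\begin{itemize}
\item Any cycle through a pair at distance at least $k$ has length at least $2k$. Equality forces it to be the union of two internally disjoint geodesics between a pair at distance exactly $k$.
\item Two vertices in the same layer at distance $k<m$ have a unique geodesic.
\item For $x=(i,0)$, $y=(j,1)$, two internally disjoint geodesics must switch layers at columns $i$ and $j$ respectively.
\end{itemize}
So the length-minimal cycles are exactly the ``ladder rectangles'' on $k$ consecutive columns. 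For $c\le 1$ such rectangles are still among the minimizers.

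Each such rectangle has the $k-2$ interior rungs as chords. A middle rung splits it into two cycles of length about $k$. Every side is again a ladder rectangle, so any sequence of chord splittings, whichever side you keep, ends in a $4$-cycle. The lemma, however, needs length $d-17=m-16$.

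This configuration is exactly what your chord analysis permits: $x\in C'$, $y\in C''$, and all pairs inside $C'$ and inside $C''$ at distance less than $k$. All eccentricities are equal here, so neither the eccentricity fact nor the triangle inequality can yield a contradiction. The long induced cycles of the prism (the two layers) are simply never reached by a length-minimal choice.

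What is missing is an extremal object that is induced by construction and that the symmetry lets you extend. The paper proceeds as follows.
\begin{itemize}
\item It takes a longest induced path $P$ whose last $\lceil\frac{d-5}{2}\rceil$ vertices form a geodesic $Q$ ending at $w$.
\item Near transitivity places a copy $S'$ of a diametral geodesic with its midpoint at or next to $w$.
\item Comparing distances along the geodesic $S'$ with a detour through the geodesic $Q$ shows that one half $L'$ of $S'$ meets $Q\cup N(Q)$ only within distance $3$ of the midpoint of $S'$.
\item Hence $P$, cut at a vertex of $Q$ near $w$, can be continued along $L'$ to an induced path with a new geodesic tail of the required length.
\item Maximality of $P$ then forces $L'$ to come within distance one of the initial part of $P$.
\item The first such contact closes an induced cycle containing a geodesic subpath of $Q$ of length at least $\frac{d-7}{2}-5$, so the cycle has length at least $d-17$.
\end{itemize}
In other words, near transitivity has to be used to move a geodesic to a prescribed location, not merely through the eccentricity bound.
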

\begin{proof}
	Suppose towards a contradiction that every induced cycle in $G$ has length less than $\frac{d}{4}$. 

	Let us fix a shortest path $S$ between two vertices $v,u$ at distance $d$ in $G$. Note that $S$ is a geodesic path and is hence induced. 
	Let $m$ be a vertex of $S$ at distance at least $\frac{d-1}{2}$ from both $u$ and $v$. 
	Let $L$ be the subpath of $S$ between $v$ and $m$ and let $R$ be the subpath between $m$ and $u$.
	
	Let $P$ be an induced path in $G$ of maximum length subject to the condition that the subpath $Q$ of $P$ on its last $\lceil \frac{d-5}{2}\rceil$ vertices is a geodesic path. Note that $S$ satisfies this condition and so such a path exists and has length at least $d$. Let $w$ be the common endpoint of $Q$ and $P$, let $x$ be the other endpoint of $P$ and let $y$ be the other endpoint of $Q$.
	
	Let $S'$ be the image of $S$ under an automorphism $\phi$ mapping $m$ to $w$ or a neighbor of $w$. Let 
    $L':=\phi(L), R':=\phi(R),$ $v':=\phi(v)$, $u':=\phi(u)$ and $w':=\phi(m)$.

    We first claim that either $L'$ or $R'$ intersects $Q \cup N(Q)$ only in vertices at distance at most $3$ from $w'$. To see this, let $a'$ and $b'$ be vertices of $L' \cap (Q \cup N(Q))$ and $R' \cap (Q \cup N(Q))$ respectively. Let $a$ and $b$ be vertices of $Q$ at distance at most one from $a'$ and $b'$, respectively. See \Cref{fig:translation} for an illustration of the setup so far.

    \begin{figure}[h]
    \centering
\includegraphics[scale=0.8]{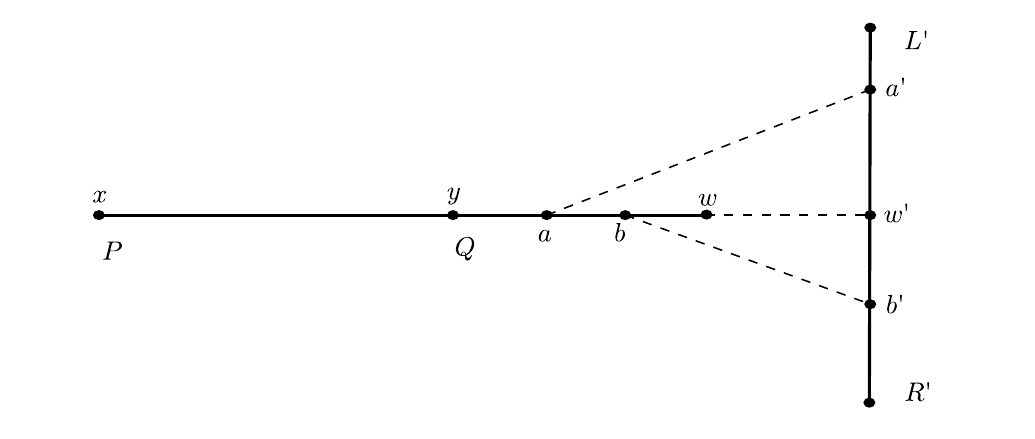}
    \caption{Illustration of the setup in the proof of \Cref{lem:long-cycle-in-cycle-graphs}. Dashed lines depict pairs of vertices at distance up to one. $L'\cup R'=S'$ is a geodesic path, and so is $Q$. This implies that $d(w,a)$ is very close to $d(w,a')$ and $d(w,b)$ to $d(w,b')$. The argument now relies on the fact that $d(a',b')$ is roughly the sum of these two distances since $a',b'$ are both vertices of the geodesic path $S'$, but also by going through the $a$-$b$ segment, we can find a path of distance roughly the difference between these two distances. The conclusion is that one of these distances needs to be very small.}
    \label{fig:translation}
\end{figure}

    Note that since $d(w,w'),d(a,a'),d(b,b') \le 1$, we get via triangle inequality that 
    \begin{align}\label{eq:1}
        |d(a',w')-d(a,w)|&\le d(a',a)+d(w,w') \le 2 \quad \text{ and}\\
        \label{eq:2}
        |d(b',w')-d(b,w)|&\le d(b',b)+d(w,w') \hspace{0.05cm}\le 2.
    \end{align}
    
    Since $S'$ is geodesic (being an image of a geodesic path under an automorphism), so are all of its subpaths. Combining this with several applications of the triangle inequality, we get
    \begin{equation}\label{eq:3}
        d(a', w')+d(b',w')=d(a',b') \le d(a,b)+2 = |d(a,w)-d(b,w)|+2.
    \end{equation} 
    Combining \eqref{eq:1}, \eqref{eq:2} and \eqref{eq:3} we obtain 
	$$d(a',w')+d(b',w') \le |d(a',w')-d(b',w')|+6 \implies \min\{d(a',w'),d(b',w')\}\le 3,$$
	 as claimed.
	
	Suppose without loss of generality
    that $L'$ intersects $Q \cup N(Q)$ only in vertices at distance at most $3$ from $w'$. Pick a vertex $c$ in this intersection farthest from $w'$. Let $z$ be a vertex at distance at most one from $c$ on $Q$, farthest from $w$. By the triangle inequality, we know that $d(z,w) \le d(c,w') +2 \le 5$. Now, let $Q'$ be the subpath of $Q$ from $y$ to $z$, and let $L''$ be the subpath of $L'$ from $c$ to $v'$. Note that $V(Q') \cup V(L'')$ induces a path by construction (since $c$ and $z$ are equal or adjacent). Let $P'$ denote the subpath of $P$ from $x$ to $y$.
    We now claim that there must be a pair of vertices $s\in V(P')$ and $t\in V(L'')$ with $d(s,t)\le 1$. Indeed, suppose for a contradiction such a pair does not exist. Then $P'$ and $L''$ are vertex-disjoint and have no connecting edges. In particular, appending the path induced by $V(Q')\cup V(L'')$ to $P'$, we obtain an induced path whose last $\lceil \frac{d-5}{2}\rceil$ vertices induce a subpath of $L'$ and thus a geodesic path. Moreover, the total length of the new path is at least $|E(P)|+\frac{d-1}{2}-8+1 > |E(P)|$, and this contradicts our choice of $P$. Hence, there indeed need to exist vertices $s\in V(P'), t \in V(L')$ with $s=t$ or $st\in E(G)$. Pick such $s$ and $t$ for which $d_{P'}(s,y)+d_{L''}(t,c)$ is minimized. Let $C$ be the cycle formed by the union of the segment of $P'$ from $s$ to $y$, the segment of $L''$ from $t$ to $c$, the edge $st$ if $s\neq t$, as well as the path in $G$ induced by $V(Q')\cup V(L'')$. It is not hard to see that by our choice of $c,z,s,t$ respectively $C$ forms an induced cycle in $G$. Since $C$ by definition contains $Q'$ as a subpath and since $Q'$ is geodesic, we find that $|E(C)|\ge 2|E(Q')|\geq 2(\frac{d-7}{2}-5)=d-17$, as desired. 
\end{proof}

We are now ready to formally put together the proof of \Cref{thm:cycles-long}.

\begin{proof}[Proof of \Cref{thm:cycles-long}]
    If our digraph has diameter $d \le n^{2/3}$, then by \Cref{lem:transitive-implies-expander,lem:long-cycles-in-expanders} we can find a directed cycle of length $\Omega(n^{1/3})$. 

    Suppose now $d \ge n^{2/3}$. By \Cref{lem:diameter-in-cycle-graph} we either have a cycle in $D$ of length at least $\Omega(n^{1/3})$ or $C(D)$ has diameter at least $O(n^{1/3})$. 

    By \Cref{lem:long-cycle-in-cycle-graphs,lem:near-transitivyity-of-cycle-graphs} we can now find an induced cycle in $C(D)$ of length $\Omega(n^{1/3})$ which via \Cref{lem:induced-to-directed-cycles} gives a directed cycle in $D$ of length $\Omega(n^{1/3})$, completing the proof.
\end{proof}

\section{Perimeter gap in vertex transitive digraphs}\label{sec:gap}

It is well-known (see, e.g.\ Trotter and Erd\H{o}s~\cite{TrotterErdos}) that there exist arbitrarily large connected vertex transitive digraphs which are not Hamiltonian. In fact, an easy counterexample is also given by a natural ``toroidal'' version of our construction depicted in \Cref{fig:disjoint}, where the bidirected edges are replaced by edges wrapping around, provided it consists of $8n+4$ vertices,  for any $n \ge 1$. In this section, we prove Theorem~\ref{thm:cycles-short}, showing that for infinitely many $n$ there are connected vertex transitive digraphs of order $n$ with perimeter gap as large as $(1-o(1))\ln n$. 

To do so, we follow~\cite{TrotterErdos} and consider Cartesian products of directed cycles. Recall that given two digraphs $D_1, D_2$, the \emph{Cartesian product} $D_1\car D_2$ is the digraph with vertex-set $V(D_1)\times V(D_2)$ in which there is an arc from a vertex $(u_1,u_2)$ to another vertex $(v_1,v_2)$ if and only if $u_1=v_1$ and $(u_2,v_2)\in A(D_2)$ or $u_2=v_2$ and $(u_1, v_1)\in A(D_2)$. It is not hard to check that the Cartesian product of two vertex transitive digraphs is still vertex transitive. Trotter and Erd\H{o}s~\cite{TrotterErdos} obtained a characterization of when the cartesian product $\vec{C}_{n_1}\car \vec{C}_{n_2}$ of two directed cycles is Hamiltonian (Rankin \cite{rankin} actually obtained a less direct characterization which applies in far more generality and, in particular, easily implies this result, see \cite[Section 5]{path-non-hamiltonicity}). We will only need the necessity part, which we state below, and include a short proof for the sake of completeness. 

\begin{theorem}[cf.~Theorem~1 in~\cite{TrotterErdos}]\label{thm:trottererdos}
Let $n_1, n_2\ge2$ be integers, and let $d:=\mathrm{gcd}(n_1,n_2)$. If $\vec{C}_{n_1}\car \vec{C}_{n_2}$ admits a directed Hamiltonian cycle, then $d \ge 2$ and there exist positive integers $d_1, d_2$ such that $d_1+d_2=d$ and $\mathrm{gcd}(n_i,d_i)=1$ for $i=1,2$. 
\end{theorem}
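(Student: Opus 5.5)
We model $\vec{C}_{n_1}\car \vec{C}_{n_2}$ on the vertex set $\mathbb{Z}_{n_1}\times\mathbb{Z}_{n_2}$. Every vertex $(i,j)$ has two out-arcs: a "horizontal" arc to $(i+1,j)$ and a "vertical" arc to $(i,j+1)$. A Hamiltonian cycle $H$ uses exactly one out-arc at each vertex and one in-arc at each vertex, so we record it as a labelling $\sigma:V\to\{h,v\}$ of the vertices by the type of out-arc used. The plan has three steps. First, show that $\sigma$ is constant along anti-diagonals. Second, count how many horizontal and vertical steps $H$ takes. Third, use that the successor map is a single cycle to get the coprimality conditions.

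\textbf{Step 1 (diagonal invariance).} Consider the vertex $(i+1,j)$. Its two in-neighbours are $(i,j)$ via a horizontal arc and $(i+1,j-1)$ via a vertical arc. Exactly one of these arcs lies in $H$. Hence $\sigma(i,j)=h$ if and only if $\sigma(i+1,j-1)\neq v$, that is, $\sigma(i+1,j-1)=h$. So $\sigma$ is constant on each anti-diagonal class $\{(i+k,j-k)\}$. These classes are the cosets of the subgroup generated by $(1,-1)$, which has order $\mathrm{lcm}(n_1,n_2)=n_1n_2/d$. There are therefore exactly $d$ classes, indexed by $c=i+j \bmod d$. This yields a set $A\subseteq \mathbb{Z}_d$ of classes labelled $h$; the remaining classes are labelled $v$.

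\textbf{Step 2 (the successor map).} The successor map is $f(i,j)=(i+1,j)$ if $i+j\bmod d\in A$ and $f(i,j)=(i,j+1)$ otherwise. In either case, $i+j$ increases by exactly $1$. So $f$ cycles through the $d$ classes in order, and the image of a vertex in class $c$ after $d$ steps is obtained by adding $(d_1,d_2)$, where $d_1=|A|$ and $d_2=d-|A|$. Thus $f^d$ is the translation by $(d_1,d_2)$. For the orbit of $f$ to be all $n_1n_2$ vertices, $\langle (d_1,d_2)\rangle$ must act transitively on each class: each class has size $n_1n_2/d$, and the $f$-orbit meets class $0$ exactly in an orbit of this translation.

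\textbf{Step 3 (deriving the conditions).} If $d=1$, then $f^d=f$ is itself a translation by $(1,0)$ or $(0,1)$, whose orbits have length $n_1$ or $n_2$, both smaller than $n_1n_2$; this is a contradiction, so $d\ge 2$. By the same reasoning, if $d_1=0$ then the translation $(0,d)$ has order at most $n_2<n_1n_2/d$, since $d\mid n_1$ and $n_1\ge 2$ gives $n_1/d\ge\ldots$; more simply, the orbit only ever moves vertically, so it misses vertices. Hence $d_1,d_2\ge 1$.

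Finally, the order of $(d_1,d_2)$ in $\mathbb{Z}_{n_1}\times\mathbb{Z}_{n_2}$ is
\[
\mathrm{lcm}\!\left(\frac{n_1}{\gcd(n_1,d_1)},\frac{n_2}{\gcd(n_2,d_2)}\right)\le \mathrm{lcm}(n_1,n_2)=\frac{n_1n_2}{d}.
\]
This order must equal $n_1n_2/d$, which forces $\gcd(n_i,d_i)=1$. To see this, suppose a prime $p$ divides $\gcd(n_1,d_1)$. Then $p\mid d$, so $p$ also divides $n_2$. If $p^a\,\|\,n_1$ and $p^b\,\|\,n_2$, the $p$-part of $\mathrm{lcm}(n_1,n_2)$ is $p^{\max(a,b)}$. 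The order is at most $\mathrm{lcm}\big(n_1/p,\,n_2\big)$ restricted to $p$-parts, and in the case $a>b$ this loses a factor of $p$. In the case $a\le b$, we also need $d_2=d-d_1$ to be divisible by $p$, since $p\mid d$ and $p\mid d_1$, so $p\mid\gcd(n_2,d_2)$ and the $p$-part drops on both coordinates. Either way the order falls strictly below $n_1n_2/d$, which is a contradiction.

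The main care needed is this final $p$-adic bookkeeping; Step 1 is the key structural idea.
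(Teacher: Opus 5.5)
Your proof is correct and follows essentially the same route as the paper. The in-arc argument at $(i+1,j)$ gives invariance under $(1,-1)$, every block of $d$ consecutive steps is the translation by $(d_1,d_2)$ (which must therefore have order $n_1n_2/d$), and a prime-by-prime comparison finishes. Your identification of the anti-diagonal classes with the fibres of $(i,j)\mapsto i+j \bmod d$ is a tidy substitute for the paper's CRT computation of $(d,0)\in\langle(1,-1)\rangle$.

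One sentence is false and should be dropped: $p\mid\gcd(n_1,d_1)$ does not give $p\mid d$ (e.g.\ $n_1=12$, $n_2=4$, $d_1=3$, $p=3$). Nothing depends on it. Your case $a>b$ already covers $b=0$, which is the paper's case $p\nmid d$, and in the case $a\le b$ you get $p\mid d$ from $b\ge a\ge 1$. Likewise, in the $d_1=0$ remark the bound $n_2<n_1n_2/d$ fails when $d=n_1$. The order of $(0,d)$ is actually $n_2/d<n_1n_2/d$, and your fallback argument that the orbit only moves vertically is fine anyway.
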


\begin{proof}
    It will be convenient to view $\vec{C}_{n_1}\car \vec{C}_{n_2}$ as a Cayley-digraph $D$ of the group $\mathbb{Z}_{n_1}\times \mathbb{Z}_{n_2}$ with generator set $S=\{(1,0),(0,1)\}$. Suppose that indeed we have a directed Hamilton cycle $C$. This gives a natural partition of $V(D)$ into a set $V^{\rightarrow}$ of vertices from which $C$ continues along a $(1,0)$ edge and $V^{\uparrow}$ as those from which it continues along a $(0,1)$ edge. Note that $|V^{\rightarrow}|+|V^{\uparrow}|=n_1n_2$ and that $V^{\rightarrow},V^{\uparrow} \neq \emptyset$ since $n_1,n_2 \ge 2$ imply that following only one type of edges does not create a Hamilton cycle.

    Observe next that for any $v \in V^{\uparrow}$ we also have\footnote{All vector operations are done in $\mathbb{Z}_{n_1}\times \mathbb{Z}_{n_2}$.} $v+(1,-1) \in V^{\uparrow}$. Indeed, since $v \in V^{\uparrow}$, we know that $v+(1,0)$ could not be preceded by $v$ in $C$, leaving $v+(1,-1)$ as the only option. This implies that $d=\gcd(n_1,n_2) \ge 2$, as otherwise $(1,-1)$ generates $\mathbb{Z}_{n_1}\times \mathbb{Z}_{n_2}$ by the Chinese Remainder Theorem and we would have either $V^{\uparrow}$ or $V^{\rightarrow}$ being the whole set (making the other empty, which is impossible).

    Similarly, if $v \in V^{\uparrow}$ we also have $v+(d,0) \in V^{\uparrow}$ since $(d,0) \in \langle (1, -1) \rangle$. Indeed, the system $a \equiv 1 \bmod {\frac{n_1}{d}}$ and $a \equiv 0 \bmod {\frac{n_2}{d}}$ has a solution since $\gcd(\frac{n_1}{d},\frac{n_2}{d})=1$. If $a$ is this solution we have $ad(1,-1)=(d,0)$, as claimed. Furthermore, since $(d-j,j)=(d,0)-j(1,-1)$ we in fact conclude $v \in V^{\uparrow} \implies v+(d-j,j) \in V^{\uparrow}$, for any $j$. If we write $C=v_1v_2\ldots v_n$ this implies that $v_i \in V^{\uparrow} \Leftrightarrow v_{i+d}\in V^{\uparrow}$ since $v_{i+d}=v_i+j(1,0)+(d-j)(0,1)= v_i+(j,d-j)$ for some $j$. 
    
    Let now $v_{d+1}=v_1+(d_1,d_2)$ where $d_1+d_2=d$. Note that this implies that exactly $d_1$ vertices among $v_1,\ldots, v_d$ are in $V^{\rightarrow}$ and exactly $d_2$ in $V^{\uparrow}$. By our observations above, we further know that the same is true about any set of vertices $v_{id+1},\ldots, v_{(i+1)d}$. So, in particular, we can conclude that $v_{kd+1}=v_1+k(d_1,d_2)$.
    This implies that the order $o$ of $(d_1,d_2)$ in $\mathbb{Z}_{n_1}\times \mathbb{Z}_{n_2}$ equals $\frac{n_1n_2}{d}$.

    On the other hand, if we write $o_i$ for the order of $d_i$ in $\mathbb{Z}_{n_i}$ we have that $o=\mathrm{lcm}(o_1,o_2)$.
    If there existed a prime $p$ dividing both $n_i$ and $d_i$, then $o_i \mid \frac{n_i}{p}$, so assuming also $p \nmid d$ we get $o=\mathrm{lcm}(o_1,o_2) \le \mathrm{lcm}(\frac{n_i}{p},n_{3-i})\le \frac{n_1n_2/p}{\gcd(n_i/p,n_{3-i})}\le \frac{n_1n_2}{pd}<\frac{n_1n_2}{d},$ a contradiction. Similarly, if $p \mid d$, we can conclude also that $p \mid d-d_i=d_{3-i}$, so $p$ divides both $n_1,n_2$ and we have  $o=\mathrm{lcm}(o_1,o_2) \le \mathrm{lcm}(n_1/p,n_{2}/p)\le \frac{n_1n_2/p^2}{\gcd(n_1/p,n_2/p)}\le \frac{n_1n_2/p^2}{d/p}<\frac{n_1n_2}{d}.$ So $\gcd(n_i,d_i)=1$ for both $i=1,2$, as claimed.
\end{proof}

Using \Cref{thm:trottererdos}, one can obtain the following lower bound on the perimeter gap of $\vec{C}_{n_1}\car \vec{C}_{n_2}$.

\begin{lemma}\label{lem:pergap}
Let $n_1, n_2\in \mathbb{N}$, let $d:=\mathrm{gcd}(n_1,n_2)$, and suppose that for all positive integers $d_1, d_2$ such that $d_1+d_2=d$ we have $\mathrm{gcd}(n_1,d_1)\ge 2$ or $\mathrm{gcd}(n_1,d_1)\ge 2$. Then, the perimeter gap of $\vec{C}_{n_1}\car \vec{C}_{n_2}$ is at least $d$.
\end{lemma}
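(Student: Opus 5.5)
The plan is to combine the non-Hamiltonicity criterion of \Cref{thm:trottererdos} with a simple counting argument based on a homomorphism to a directed cycle of length $d$. (The hypothesis should presumably read ``$\mathrm{gcd}(n_1,d_1)\ge 2$ or $\mathrm{gcd}(n_2,d_2)\ge 2$''; I will use it in that form.) As in the proof of \Cref{thm:trottererdos}, I will view $D=\vec{C}_{n_1}\car \vec{C}_{n_2}$ as the Cayley digraph of $\mathbb{Z}_{n_1}\times\mathbb{Z}_{n_2}$ with generators $(1,0),(0,1)$. I assume $n_1,n_2\ge 2$, as in \Cref{thm:trottererdos}, so that $D$ is a genuine Cartesian product of cycles.

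\textbf{Step 1: $D$ is not Hamiltonian.} Suppose $D$ had a Hamilton cycle. Then \Cref{thm:trottererdos} gives $d\ge 2$ together with positive integers $d_1,d_2$ satisfying $d_1+d_2=d$ and $\gcd(n_i,d_i)=1$ for $i=1,2$. This is exactly what the hypothesis excludes. When $d=1$ the hypothesis is vacuous, but \Cref{thm:trottererdos} still forbids a Hamilton cycle because it requires $d\ge 2$. In every case, $D$ has no Hamilton cycle.

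\textbf{Step 2: the diagonal map.} Since $d$ divides both $n_1$ and $n_2$, the map $f(a,b)=a+b \bmod d$ is well defined on $\mathbb{Z}_{n_1}\times\mathbb{Z}_{n_2}$. Every arc of $D$ increases $f$ by exactly $1$. The fibres $V_k=f^{-1}(k)$, for $k\in\mathbb{Z}_d$, partition $V(D)$ into $d$ classes. Each class has size $n_1n_2/d$, because $f$ is a surjective group homomorphism onto $\mathbb{Z}_d$.

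\textbf{Step 3: counting along a cycle.} Let $C$ be any directed cycle of length $L$. Along $C$, the value of $f$ runs cyclically through $0,1,\dots,d-1,0,\dots$. Consequently $d\mid L$, and $C$ meets every class $V_k$ in exactly $L/d$ vertices. By Step 1, $C$ is not Hamiltonian, so $L<n_1n_2$. Since both $L$ and $n_1n_2$ are multiples of $d$, this gives $L/d\le n_1n_2/d-1$. Hence $C$ misses at least one vertex in each of the $d$ classes, so $L\le n_1n_2-d$, and the perimeter gap is at least $d$.

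The only real content lies in Step 1, which is already supplied by \Cref{thm:trottererdos}. The counting in Steps 2 and 3 is routine once one notices that arcs shift $a+b \bmod d$ uniformly.
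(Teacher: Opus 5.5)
Your proof is correct and matches the paper's argument: non-Hamiltonicity from \Cref{thm:trottererdos}, together with $d \mid |C|$ and $d \mid n_1n_2$, forces $|C|\le n_1n_2-d$. The only difference is cosmetic: you get $d\mid |C|$ from the homomorphism $(a,b)\mapsto a+b \bmod d$, while the paper uses $n_1\mid \ell_1$ and $n_2\mid \ell_2$ for the arc-type counts $\ell_1,\ell_2$. You are also right that the second clause of the hypothesis should read $\gcd(n_2,d_2)\ge 2$, and that $n_1,n_2\ge 2$ is implicitly needed.
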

\begin{proof}
    Let $C$ be a maximum length directed cycle in $D:=\vec{C}_{n_1}\car \vec{C}_{n_2}$. As before, we view $D$ as a Cayley-digraph of the group $\mathbb{Z}_{n_1}\times \mathbb{Z}_{n_2}$ with generator set $S=\{(1,0),(0,1)\}$ and we classify the arcs on $C$ into two types: Those arcs which correspond to the element $(1,0)$ of $S$ and those which correspond to the element $(0,1)$ of $S$. Let $\ell_1, \ell_2$ denote the number of arcs on $C$ of the first and second type, respectively. We can then see that, since $C$ is a directed cycle, we must have $\ell_1(1,0)+\ell_2(0,1)=(0,0)$, i.e., $\ell_1$ is divisible by $n_1$ and $\ell_2$ is divisible by $n_2$. In particular, both $\ell_1$ and $\ell_2$ must be divisible by $d$, and hence the length $|C|=\ell_1+\ell_2$ of $C$ must also be divisible by $d$. Since also $n=n_1n_2$ is divisible by $d$, it follows that either $|C|=n$ or $|C|\le n-d$. However, the first case is impossible by our assumption in the lemma, which together with Theorem~\ref{thm:trottererdos} rules out the existence of a directed Hamiltonian cycle. Hence, we indeed must have $|C|\le n-d$, proving that the perimeter gap is at least $d$. 
\end{proof}

Let us call a number $d\in \mathbb{N}$ \emph{prime partitionable}~\cite{TrotterErdos} (alternatively, these numbers are also referred to as Erd\H{o}s-Woods numbers) if there exist $n_1, n_2\in \mathbb{N}$ such that $d=\mathrm{gcd}(n_1,n_2)$ and for every pair of positive numbers $d_1, d_2$ with $d_1+d_2=d$ we have $\mathrm{gcd}(n_i,d_i)>1$ for some $i\in \{1,2\}$. In this case, we call $(n_1,n_2)$ a \emph{witness} for $d$ being prime partitionable. According to Lemma~\ref{lem:pergap}, for every prime partitionable number $d\in \mathbb{N}$ with witness $(n_1,n_2)$ there exists a connected vertex transitive digraph on $n_1n_2$ vertices with perimeter gap at least $d$. Hence, Alspach's question has a positive answer provided we can show that there exist infinitely many prime partitionable numbers. Curiously, Trotter and Erd\H{o}s~\cite{TrotterErdos} claimed this result in their paper from 1979. However, their proof was flawed, as we want to explain now: Their proof was starting from the assumption (claimed as a fact in the paper by Trotter and Erd\H{o}s) that there exist infinitely many pairs of prime numbers $p_1, p_2$ such that $p_2=2p_1+1$. However, a prime number $p_1$ such that $2p_1+1$ is also a prime is called a \emph{Sophie-Germain prime}, and until today it remains a widely open problem whether infinitely many such numbers exist. 
In the following, we present a modified argument avoiding this assumption. We shall also need an explicit quantitative bound on the numbers $n_1, n_2$ with $\mathrm{gcd}(n_1,n_2)=d$ certifying that a certain number $d$ is prime partitionable in order to obtain our \Cref{thm:cycles-short}. In the proof, we make use of the following result due to Motohashi, which is one of many results related to estimates of the famous \emph{Linnik constant} in number theory~\cite{linnik1,linnik2}.

\begin{theorem}[Motohashi~\cite{motohashi70}]\label{thm:motohashi}
There exists an absolute constant $\vartheta<1.64$ such that, for infinitely many primes $p$, there exists another prime $q$ such that $q\equiv 1~(\mathrm{mod}\text{ }p)$ and $q<p^\vartheta$. 
\end{theorem}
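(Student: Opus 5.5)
Since \Cref{thm:motohashi} is an imported analytic number theory result, the plan is to reconstruct the standard argument for it rather than derive it from anything earlier in this paper. First I would reformulate it in terms of shifted primes. If $q$ is a prime and $p$ is a prime factor of $q-1$ with $p>q^{c}$ for a fixed $c>1/\vartheta$, then $q\equiv 1 \pmod p$ and $q<p^{1/c}\le p^{\vartheta}$. Moreover, as $q\to\infty$ these $p$ also tend to infinity, so infinitely many distinct $p$ arise. It therefore suffices to show that for a fixed $c>1/1.64\approx 0.6098$ (Motohashi uses $c\approx 0.61$), infinitely many primes $q$ have $P^+(q-1)>q^{c}$. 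Here $P^+(m)$ denotes the largest prime factor of $m$.

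To prove this I would follow the Goldfeld--Motohashi weighted counting. Start from the identity
\[
\sum_{q\le y}\log(q-1)=\sum_{p^k\le y}\log p\cdot \pi(y;p^k,1),
\]
where the left-hand side is asymptotic to $y$, by the prime number theorem. The goal is an upper bound below $(1-\delta)y$ for the part of the right-hand side with $p\le y^{c}$. Then the primes $p>y^{c}$ must contribute at least $\delta y$, and in particular some $q\le y$ has a prime factor $p>y^c\ge q^c$ of $q-1$. Letting $y\to\infty$ gives the claim.

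The estimate splits into three ranges.
\begin{itemize}
\item Prime powers with $k\ge 2$ contribute $o(y)$ by trivial bounds.
\item For $p\le y^{1/2}(\log y)^{-B}$, the Bombieri--Vinogradov theorem gives $\pi(y;p,1)\approx \pi(y)/(p-1)$ on average over $p$. Summing $\log p/(p-1)$ then gives a contribution of $(\tfrac12+o(1))y$.
\item For $y^{1/2}<p\le y^{c}$, no asymptotic is available, so I would use a Brun--Titchmarsh upper bound $\pi(y;p,1)\le \frac{C(\theta)\,y}{(p-1)\log(y/p)}$ with $p=y^{\theta}$. Integrating gives a contribution of $y\int_{1/2}^{c} \frac{C(\theta)\,\theta}{1-\theta}\,d\theta$.
\end{itemize}
The requirement is therefore
\[
\tfrac12+\int_{1/2}^{c}\frac{C(\theta)\,\theta}{1-\theta}\,d\theta<1 .
\]

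The main obstacle is this third range. The classical constant $C(\theta)=2$ only yields $c\approx 0.583$, which is Goldfeld's result and gives $\vartheta\approx 1.715$. To reach $c$ slightly above $0.6098$, I would follow Motohashi and replace $C(\theta)$ by an improved Brun--Titchmarsh inequality valid on average over prime moduli $p$ in dyadic ranges. That inequality comes from a Selberg sieve whose remainder terms are controlled by large-sieve/dispersion estimates for Kloosterman-type sums. The remainder is bookkeeping: checking numerically that the integral inequality holds with $c=1/1.64$ plus a small margin, and that the $o(y)$ error terms are uniform.
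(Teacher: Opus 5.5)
The paper does not prove this statement; it is quoted from Motohashi, so your reconstruction has to stand on its own. Your framework is the right one: Goldfeld's $\log$-weighted count, with Bombieri--Vinogradov below $\sqrt y$ and Brun--Titchmarsh above. However, you evaluated the Brun--Titchmarsh range incorrectly, and that error is what sends you looking for a non-classical input. By partial summation from $\sum_{p\le t}\frac{\log p}{p}=\log t+O(1)$, with $p=y^{\theta}$,
\[
\sum_{y^{1/2}<p\le y^{c}}\log p\cdot\frac{C\,y}{(p-1)\log(y/p)}=(1+o(1))\,C\,y\int_{1/2}^{c}\frac{d\theta}{1-\theta}=(1+o(1))\,C\,y\log\frac{1}{2(1-c)}.
\]
Your integrand $\frac{C\theta}{1-\theta}$ carries a spurious factor $\theta<1$: the measure $\sum_p 1/p$ becomes $d\theta/\theta$, not $d\theta$. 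So your expression is not an upper bound for this range. It also contradicts your own remark: with your integrand and $C=2$, the criterion would hold up to $c\approx 0.672$, not $0.583$.

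With the correct integral, the classical constant $C=2$ already suffices. It is available from Montgomery--Vaughan, or as $2+o(1)$ from the Selberg sieve, since $y/p\ge y^{1-c}\to\infty$. The criterion becomes $\tfrac12+2\log\frac{1}{2(1-c)}<1$, i.e.\ $c<1-\tfrac12e^{-1/4}\approx 0.6106$. Since $1/1.64\approx 0.6098$, you may take $c=0.61$, giving $\vartheta=1/0.61\approx 1.639$ with margin $\delta\approx 0.003$. So the improved average Brun--Titchmarsh inequality via Kloosterman sums, which you call the main obstacle and leave as an unspecified black box, is not needed. As written, though, your proof rests on that black box and on a miscomputed criterion, so it is not complete.

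Because the margin is this thin, two steps you treat as routine need real input. First, the prime powers $p^k$ with $k\ge 2$ are not $o(y)$ by the trivial bound $\pi(y;p^k,1)\le y/p^k+1$. That bound only gives about $y\sum_p\frac{\log p}{p(p-1)}\approx 0.76\,y$, which would swamp $\delta$. Instead, use Brun--Titchmarsh (or Bombieri--Vinogradov) for $p^k\le\sqrt y$, giving $O(y/\log y)$, and the trivial bound only for $p^k>\sqrt y$, giving $O(y^{3/4})$. Second, your ranges skip $y^{1/2}(\log y)^{-B}<p\le y^{1/2}$; Brun--Titchmarsh bounds this by $O(y\log\log y/\log y)$. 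With these corrections, the primes $p>y^{c}$ contribute at least $(\delta-o(1))y>0$, and your reduction to $P^+(q-1)>q^{c}$ finishes the proof.
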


We shall deduce the following consequence of Motohashi's theorem.

\begin{lemma}\label{lem:primepartitionable}
There are infinitely many numbers $d\in \mathbb{N}$ such that $d$ is prime-partitionable, and this can be witnessed by numbers $(n_1,n_2)$ such that $n_1n_2\le e^{d+o(d)}$. 
\end{lemma}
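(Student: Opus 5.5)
The plan is to build $d$ and the witness $(n_1,n_2)$ from one pair of primes $p,q$ supplied by \Cref{thm:motohashi}, and to assign every other prime below $d$ to $n_1$. To fix the framework, suppose we have a set $P_2$ of primes not dividing $d$ and put every other prime $r<d$ with $r\nmid d$ into $P_1$. Set
\[
n_1:=d\prod_{r\in P_1} r,\qquad n_2:=d\prod_{r\in P_2} r .
\]
Then $P_1\cap P_2=\emptyset$ gives $\gcd(n_1,n_2)=d$. Every prime below $d$ divides $n_1$ unless it lies in $P_2$. Hence a decomposition $d_1+d_2=d$ can only fail the covering condition when every prime factor of $d_1$ lies in $P_2$. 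So prime partitionability reduces to one check: for every $d_1\in[1,d-1]$ all of whose prime factors lie in $P_2$, some prime of $P_2$ divides $d-d_1$. The size bound is then automatic. Since $P_1\cup P_2$ consists of primes below $d$, the prime number theorem gives $\prod_{r<d}r=e^{(1+o(1))d}$, and so $n_1n_2\le d^2e^{(1+o(1))d}=e^{d+o(d)}$, provided the primes in $P_2$ are below $d$.

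For the construction, take $p,q$ from \Cref{thm:motohashi} with $q\equiv 1 \pmod p$ and $q<p^{\vartheta}$, and set $P_2=\{p,q\}$. Choose $d$ by the Chinese Remainder Theorem with
\[
d\equiv 1 \pmod p,\qquad d\equiv p \pmod q,\qquad p<d<pq .
\]
Now test the bad values $d_1=p^aq^b<d$ one type at a time.
\begin{itemize}
\item If $a=0$, then $d-q^b\equiv 1-1\equiv 0\pmod p$, because $q\equiv 1\pmod p$. So $p$ covers $d-d_1$; this includes $d_1=1$, which is exactly where the congruence $q\equiv 1\pmod p$ is used.
\item If $a\ge1$ and $b\ge1$, then $d_1\ge pq>d$, so no such $d_1$ occurs.
\item If $b=0$ and $a=1$, then $d-p\equiv 0\pmod q$, so $q$ covers $d-d_1$.
\end{itemize}
The only remaining values are $d_1=p^a$ with $a\ge2$ and $p^a<d$.

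The main obstacle is exactly these higher powers $p^a$. I would first try to force $d<p^2$ by adjusting the residues, for instance replacing $d\equiv p \pmod q$ by a suitable multiple condition, or working modulo $p^2$, so that the CRT solution lands below $p^2$. If that cannot be arranged, I would enlarge $P_2$ with a few extra auxiliary primes dividing $d-p^a$ for the $O(\log d)$ exponents $a\ge 2$ with $p^a<d$. Each such prime must be chosen below $d$ and must not create new bad $d_1$, which can be checked with the same congruence bookkeeping. Either repair keeps $P_2$ inside the primes below $d$, so the bound $n_1n_2\le e^{d+o(d)}$ from the first paragraph is untouched.

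Motohashi's bound $q<p^{\vartheta}$ with $\vartheta<1.64$ keeps all the constructed primes polynomially bounded in $d$, since $d<pq<p^{1+\vartheta}$, hence below $d$ as required. Finally, \Cref{thm:motohashi} supplies infinitely many primes $p$, and $d>p$, so we obtain infinitely many prime-partitionable $d$.
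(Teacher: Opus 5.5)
\textbf{Gap: the case $d_1=p^a$ with $a\ge 2$ is left open, although it is empty.} Your framework and case analysis are sound, and your construction is in fact the paper's. Since $q\equiv 1\pmod p$, the number $p+q$ satisfies $d\equiv 1\pmod p$ and $d\equiv p\pmod q$, and it lies in $(p,pq)$. By uniqueness of the CRT solution modulo $pq$, your $d$ is exactly $d=p+q$; only the roles of $n_1,n_2$ are swapped relative to the paper.

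The missing step is one line, and it is precisely where \Cref{thm:motohashi} is needed. For $p$ large, $d=p+q<p+p^{\vartheta}<p^2$ because $\vartheta<2$. So no power $p^a$ with $a\ge 2$ lies below $d$, and the case you call the main obstacle does not occur.

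Your final paragraph misplaces the role of the bound $q<p^{\vartheta}$. That $p,q<d$ is trivial for $d=p+q$, and an upper bound such as $d<p^{1+\vartheta}$ cannot show that any prime lies below $d$. Dirichlet's theorem alone already gives some prime $q\equiv 1\pmod p$. What Motohashi's theorem adds is $q<p^2-p$, i.e.\ $d<p^2$, for infinitely many $p$.

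\textbf{Neither fallback you propose would work as stated.} There is no freedom to \emph{adjust residues} with $P_2=\{p,q\}$:
\begin{itemize}
\item Covering $d_1=p$ forces $q\mid d-p$, because $\gcd(d,d-p)=1$ and $p\nmid d-p$. Hence $d\equiv p\pmod q$, and also $d>q$.
\item Covering $d_1=q$ then forces $p\mid d-q$, i.e.\ $d\equiv q\equiv 1\pmod p$.
\item The value $d_1=pq$ could never be covered, since $\gcd(d,pq)=1$. So $d<pq$.
\end{itemize}
Thus $d=p+q$ is forced, and $d<p^2$ is a condition on the choice of $q$, not on residues.

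Enlarging $P_2$ is also unjustified as written. Every new prime $s\in P_2$ creates new uncovered candidates such as $d_1=s$, $ps$ or $s^2$, whose complements must in turn be divisible by primes of $P_2$. You give no argument that this bookkeeping closes.

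A minor slip: $q\equiv 1\pmod p$ is what handles $d_1=q$, not $d_1=1$. The value $d_1=1$ is covered by $d\equiv 1\pmod p$ alone.
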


\begin{proof}
Let $\vartheta<1.64$ be the constant from Theorem~\ref{thm:motohashi}. Let us pick arbitrarily one of the infinitely many prime numbers $p$ satisfying the statement of Theorem~\ref{thm:motohashi}. We will define $d, n_1, n_2$ depending on $p$ satisfying the desired properties as follows. First, we pick some prime number $q$ such that $q\equiv 1 \text{ }(\mathrm{mod}\text{ }p)$ and such that $q\le p^\vartheta$ (which exists by Theorem~\ref{thm:motohashi} and our choice of $p$). 

Now, we define $d:=p+q$, $n_1:=dpq$ and $n_2:=d\cdot \prod_{z\in \mathbb{P}\setminus \{p,q\}, z<d}z$. 

We claim that $d$ is prime partitionable with witness $(n_1,n_2)$. By construction, $\mathrm{gcd}(n_1,n_2)=d$. Next, consider any positive numbers $d_1, d_2$ such that $d_1+d_2=d$, and let us show $\mathrm{gcd}(n_i,d_i)>1$ for some $i\in \{1,2\}$. Towards a contradiction, suppose that $n_1, d_1$ and $n_2, d_2$ are coprime. Then, by definition of $n_2$, we immediately see that $d_2$ cannot have any prime factor distinct from $p$ or $q$. Furthermore, $q^2>pq>p^2>p+p^\vartheta>p+q=d$ by our choice of $p$ and $q$ (without loss of generality we may assume that $p$ is sufficiently large). Since $d_2<d$, it follows that $d_2\in \{1,p,q\}$. However, this implies that $d_1\in \{p+q-1,p,q\}$. Since neither $p$ nor $q$ is coprime with $n_1$ by definition, we conclude that $d_1=p+q-1$. But since $q\equiv 1 \text{ }(\mathrm{mod}\text{ }p)$, it then follows that $d_1$ is divisible by $p$, again a contradiction, since $n_1$ is also divisible by $p$. Having obtained a contradiction in all possible cases, we conclude that we indeed must have $\mathrm{gcd}(n_i,d_i)>1$ for some $i\in \{1,2\}$. Hence, we have shown that $d$ is prime partitionable with witness $(n_1,n_2)$. Since we create infinitely many distinct numbers $d$ when making infinitely many distinct choices for $p$ in this construction, this concludes the proof of the first part of the statement of the lemma. It remains then to verify that $n_1, n_2$ as defined above satisfy $n_1n_2\le e^{d+o(d)}$. This, however, is easy to check: By the prime number theorem there are at most $(1+o(1))\frac{d}{\ln(d)}$ prime numbers smaller than $d$, which implies:
$$n_1n_2=d^2\prod_{z\in \mathbb{P}, z<d}z\le d^2\cdot d^{(1+o(1))d/\ln d}=e^{d+o(d)},$$ as desired.
\end{proof}

Combining Lemma~\ref{lem:pergap} and Lemma~\ref{lem:primepartitionable} now directly implies Theorem~\ref{thm:cycles-short}.
\begin{proof}[Proof of Theorem~\ref{thm:cycles-short}]
By Lemma~\ref{lem:primepartitionable} there are infinitely many numbers $d$ such that there exist $(n_1,n_2)$ with $n:=n_1n_2\le e^{d+o(d)}$ witnessing that $d$ is prime partitionable. By Lemma~\ref{lem:pergap} we then have that the perimeter gap of the vertex transitive digraph $\vec{C}_1\car\vec{C}_2$ on $n$ vertices is at least $d\ge (1-o(1))\ln n$. Since we certainly generate infinitely many distinct values of $n$ this way, this proves the assertion of the theorem.
\end{proof}

\section{Concluding remarks}\label{sec:conc-remarks}

In this paper, we proved the first growing bounds on the possible perimeter gap of vertex-transitive digraphs, answering a question of Alspach from 1981, as well as on the circumference of such graphs. The most immediate question is to determine the best possible bounds. However, this seems far out of reach, given that we do not even know the answer in the undirected case (which is a special case). We raise two natural intermediate targets below which we believe might be more attainable. 

\begin{conj}
    There exists an $\eps >0$ and infinitely many values of $n$ for which there exists a connected vertex transitive digraph of order $n$, whose perimeter gap is at least $\eps n$.
\end{conj}

In fact, it would already be interesting to improve our logarithmic bound from \Cref{thm:cycles-short} to a polynomial one.

On the other hand, we prove that such digraphs always have cycles of length at least $\Omega(n^{1/3})$. Given that this is slightly weaker than the best known bounds in the undirected setting, it would be interesting to at least asymptotically match the best known bounds or even to show that, in general, the answers in the directed and undirected cases are asymptotically the same.

\begin{conj}
    Let $c(n)$ denote the minimum circumference of a connected vertex transitive graph on $n$ vertices. Let $d(n)$ denote the minimum circumference of a connected vertex transitive digraph on $n$ vertices. Then $d(n)= \Theta(c(n))$.
\end{conj}


As mentioned in the introduction and illustrated by \Cref{fig:disjoint}, a major obstacle to translating results from graphs to digraphs is that, in contrast to $2$-connected graphs, strongly $2$-connected digraphs may contain vertex-disjoint longest cycles.
To overcome this obstacle, it makes sense to consider conditions on digraphs which might guarantee that every pair of longest cycles intersect.
We wonder if vertex transitivity could be such a condition.


\begin{qn}
    Is it true that in any connected vertex transitive digraph, any two longest directed cycles intersect?
\end{qn}




Another very natural question to ask is: What happens in the infinite case? Here, if one removes the bidirected edges in \Cref{fig:disjoint} and extends it on both sides to infinity, one obtains an example of an infinite strongly $2$-connected vertex-transitive digraph that has no directed cycles of length greater than four. However, this example is not strongly $3$-connected, and it is natural to ask if one can make such a construction that is strongly $k$-connected for any constant $k$.
One can show that strong connectivity together with high minimum degree is insufficient to guarantee long cycles in this setting by considering the digraph obtained from a $k$-regular tree by replacing each vertex by an independent set of size $2$ and each edge by a directed $C_4$.

Another interesting question is whether the answer is the same if we look for a long path instead of a cycle in a connected vertex transitive digraph. In the undirected setting, this is the case, at least up to a constant factor, thanks to a classical result in this direction (see, e.g.\ \cite{path-vs-cycle}).

\begin{qn}
    Does there exist $C>0$ such that in any vertex transitive digraph, the length of a longest path is by at most a factor of $C$ larger than the length of a longest directed cycle?
\end{qn}

\Cref{thm:cycles-short} shows that the two lengths can differ by at least an additive logarithmic term. Indeed, our construction uses a Cayley digraph over an abelian group, and it is a classical result of Holszty\'nski and Strube \cite{abelian-path} that every Cayley digraph over an abelian group contains a directed Hamiltonian path. 

It is known for a long time that there exist Cayley digraphs that do not contain a Hamilton path. The oldest claimed instance of such a construction appears as an exercise in an early textbook on algorithms by Nijenhuis and Wilf \cite{nijenhuis-wilf}. However, this was later shown to be false in \cite{disproof}. Another construction, attributed to Milnor, was stated in \cite{nathanson} without proof, and was extended and improved upon by Morris in \cite{path-non-hamiltonicity}. It would be interesting to prove a variant of \Cref{thm:cycles-short} where the longest directed path has length at most $n-\omega(1)$.  

\bigskip

\textbf{Acknowledgments.} This work was initiated at the 2025 Barbados Graph Theory workshop in Holetown, Barbados. The authors thank the organizers for creating a stimulating working atmosphere. The fourth author would like to thank Anders Martinsson for interesting discussions related to the subject of this paper.

\providecommand{\MR}[1]{}
\providecommand{\MRhref}[2]{%
\href{http://www.ams.org/mathscinet-getitem?mr=#1}{#2}}

  \bibliographystyle{amsplain_initials_nobysame}
  \bibliography{refs}

\end{document}